\documentclass[11pt]{amsart}

\usepackage{cmap}
\usepackage[T2A]{fontenc}
\usepackage[utf8]{inputenc}
\usepackage[english]{babel}
\usepackage{amsfonts,amssymb}
\usepackage{hyperref}
\usepackage{stmaryrd}
\usepackage[left=2.0cm,right=2.0cm,top=2.0cm,bottom=2.0cm]{geometry}

\usepackage[all,cmtip]{xy}
\usepackage{capt-of} 

\newcommand{\mytitle}{The reverse decomposition of unipotents for bivectors}
\title{\mytitle}

\author{Roman~Lubkov}
\thanks{This publication is supported by RFBR, project number 19-31-90072}
\address{St.~Petersburg Department of V.\,A.\,Steklov Institute of Mathematics of the Russian Academy of Sciences}
\email{RomanLubkov@yandex.ru}

\keywords{general linear group, elementary group, fundamental representation, decomposition of unipotents, reverse decomposition of unipotents, sandwich classification}

\subjclass{20G35}
\date{}
\sloppy

\DeclareMathOperator{\E}{E}
\DeclareMathOperator{\A}{A}
\DeclareMathOperator{\D}{D}

\DeclareMathOperator{\CC}{C}
\DeclareMathOperator{\SL}{SL}
\DeclareMathOperator{\GL}{GL}
\DeclareMathOperator{\sign}{sgn}
\DeclareMathOperator{\Plu}{Pl\ddot{u}}
\DeclareMathOperator{\Stabi}{Stab}
\DeclareMathOperator{\height}{ht}
\DeclareMathOperator{\level}{lev}

\renewcommand{\trianglelefteq}{\trianglelefteqslant}
\renewcommand{\leq}{\leqslant}
\renewcommand{\geq}{\geqslant}


\newcommand{\bw}[1]{\mathord{\raisebox{2pt}
{\hbox{$\scriptstyle{\bigwedge^{\!#1}}$}}}}

\theoremstyle{plain}
\newtheorem{theorem}{Theorem} 
\newtheorem{prop}{Proposition}
\newtheorem{lemma}{Lemma}

\theoremstyle{remark}
\newtheorem*{remark}{Remark}

\theoremstyle{definition}
\newtheorem*{example}{Example}

\begin{document}

\begin{abstract}
For the second fundamental representation of the general linear group over a commutative ring $R$ we construct straightforward and uniform polynomial expressions of elementary generators as products of elementary conjugates of an arbitrary matrix and its inverse. Towards the solution we get stabilization theorems for any column of a matrix from $\GL_{\binom{n}{2}}(R)$ or from the exterior square of $\GL_n(R)$, $n\geq 3$.
\end{abstract}

\maketitle

\section*{Introduction}\label{intro}

Alexei Stepanov proposed the decomposition of unipotens for $\GL_n(R)$ in 1987~\cite{StepPhD}. Almost at once Nikolai Vavilov generalized this result to other split classical groups~\cite{VavHabilit}, and in 1990 Nikolai Vavilov, Alexei Stepanov, and Eugene Plotkin developed the method for exceptional groups~\cite{VPSComput}. Since the 1990s the decomposition of unipotents was the focus of a number of authors, see~\cite{PetDecomp,StepDecomp,StepVavDecomp,VavDecomp25,VavKaz} for further references.

Let $\Phi$ be an irreducible root system of rank $\geq 2$, $R$ be an arbitrary commutative ring with $1$, and $G(\Phi,R)$ be the simply connected Chevalley group of type $\Phi$ over $R$. Fix a split maximal torus $T(\Phi,R)$ of $G(\Phi,R)$ and the corresponding elementary generators $x_{\alpha}(\xi)$,
where $\alpha\in\Phi$, $\xi\in R$. Let $E(\Phi,R)$ be the elementary subgroup spanned by all these elementary generators.

The decomposition of unipotents can be viewed as an effective version of the normality of elementary subgroups, i.\,e., that $E(\Phi,R)$ is normal in $G(\Phi,R)$. In particular, for simply laced
root systems admitting microweights the method provides straightforward formulae for $gx_{\alpha}(\xi)g^{-1}$ as a product of elementary matrices, where $g$ is an arbitrary matrix in $G(\Phi,R)$.
Let $gx_{\alpha}(\xi)g^{-1}$ be a product of $\leq L$ elementary generators. Then $L$ equals $4(l+1)l$ for $\A_l$, $4\cdot 2l\cdot 2(l-1)$ for $\D_l$, $4\cdot 27\cdot 16$ for $\E_6$, and $4\cdot 56\cdot 27$ for $\E_7$, see~\cite{HSVZwidth,VPSComput,VavThird}.

The reverse decomposition of unipotents was proposed by Raimund Preusser in 2017. For classical groups the method provides similar explicit short polynomial expressions of the elementary generators of $g^{E(\Phi,R)}$ in terms of the elementary conjugates of $g$ and $g^{-1}$, see~\cite{PreusserHyp2,PreusserOUodd,PreusserGLOUeven}. Soon Nikolai Vavilov and Zuhong Zhang generalized the idea to exceptional groups and developed it for relative elementary matrices $x\in \E_n(J)$ or $x\in \E_n(R,J)$, where $J$ is an ideal of $R$~\cite{VavTowards,VavTowRel}.

In the present paper, we produce a variation of the reverse decomposition of unipotents for the exterior square of an elementary group. For a commutative ring $R$ by $\bw{2} R^{n}$ we denote the second exterior power of the free module $R^n$. We consider the natural transformation,
$$\bw{2}: \GL_{n} \rightarrow \GL_{\binom{n}{2}}$$
which extends the action of the group $\GL_{n}(R)$ on the module $\bw{2} R^{n}$. 

An elementary group $\E_n(R)$ is a subgroup of the group of points $\GL_{n}(R)$, so its exterior square $\bw{2}\E_n(R)$ is a well-defined subgroup of the group of points $\bw{2}\GL_{n} (R)$. Recall that the Pl\"ucker relations consist of vanishing certain homogeneous quadratic polynomials $f_{I,J}\in R[x_H]_{H\in \bw{2}[n]}$ of Grassmann coordinates $x_H$. For $m$--exterior power there are short and long Pl\"ucker relations, but for $m=2$ all relations are short and have the following form
$$f_{I,J}=\sum\limits_{j\in J\backslash I}\pm x_{I\cup \{j\}}x_{J\backslash \{j\}},$$
where $I\in\bw{1}[n]$ and $J\in\bw{3}[n]$, see~\cite{VavPere}. The second exterior power of $\GL_n(R)$ [for $n\neq 4, n\geq 3$] is a stabilizer of the Pl\"ucker ideal
$$\Plu(n,R)\trianglelefteq R\,[x_{I}:\, I \in \bw{2}\{1,\dots,n\}],$$
where $\Plu(n,R)$ is generated by the standard short Pl\"ucker relations.

Further, the \textit{upper level} of a matrix $g\in\bw{2}\GL_n(R)$ is the smallest ideal $A=\level(g)\trianglelefteq R$ such that $g$ belongs to the full preimage of the center of $\bw{2}\GL_{n}(R/A)$ under the reduction homomorphism $\bw{2}\GL_{n}(R)\longrightarrow \bw{2}\GL_{n}(R/A)$. The upper level is generated by the $\binom{n}{2}^2-\binom{n}{2}$ off-diagonal entries and by the $\binom{n}{2}-1$ pair-wise differences of its diagonal entries. Overall we have $\binom{n}{2}^2-1$ elements. 

Notice that the group $\bw{2}\GL_n(R)$ has an invariant form $f$ for even $n$ and an ideal of forms $K$ for odd $n$. Thus, $\bw{2}\GL_n(R)$ can be regarded as a stabilizer of $f$ or $K$ in the sense of semi-invariance, see~\cite{LubNekoverExPowArx}. We stress that $\bw{2}\big(\GL_{n}(R)\big)$ do not equal $\bw{2}\GL_n(R)$. The first group is the image of the general linear group under the Cauchy--Binet homomorphism, whereas the second one is the group of $R$-points of the image of the group scheme $\GL_{n}$ under the natural transformation. We refer the reader to~\cite{VavPere}, where this point is discussed in detail.

Note also that the exterior square of the general linear group for $n=3$ is isomorphic to $\GL_3(R)$. This case has been completely solved by Raimund Preusser in~\cite{PreusserGLOUeven}. So we work only in the case $n\geq 4$.

In these terms the reverse decomposition of unipotens for the second exterior powers looks as follows.
\begin{theorem}
Let R be a commutative ring, $n\geq 4$, and $g\in\bw{2}\GL_n(R)$. Then for any $\xi\in \level(g)$, $1\leq k\neq l\leq n$, the transvection $\bw{2}t_{k,l}(\xi)$ is a product of $\leq 8\big(\binom{n}{2}^2-1\big)$ elementary conjugates of $g$ and $g^{-1}$.
\end{theorem}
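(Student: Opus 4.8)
\emph{Plan of proof.} The plan is to split the argument into a reduction to the $\binom{n}{2}^2-1$ standard generators of $\level(g)$ and an extraction of a single transvection from a short commutator built out of $g$, $g^{-1}$ and root elements; the column-stabilization theorems enter as the tool that makes the extraction go through over an arbitrary ring.

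First I would reduce to a statement about one generator of the level at a time. Regarding $g$ as a matrix in $\GL_{\binom{n}{2}}(R)$, write $\level(g)=\sum_{s=1}^{N}R\,a_s$, where $N=\binom{n}{2}^2-1$ and $a_1,\dots,a_N$ are the off-diagonal entries $g_{IJ}$ ($I\neq J$) together with the pairwise differences $g_{II}-g_{JJ}$ of the diagonal entries, as in the introduction. Since $\bw{2}$ is a homomorphism, $\bw{2}t_{k,l}(\xi)$ is additive in $\xi$; hence for $\xi=\sum_s r_s a_s$ one has $\bw{2}t_{k,l}(\xi)=\prod_{s=1}^{N}\bw{2}t_{k,l}(r_s a_s)$, and it is enough to prove that for each generator $a$ of $\level(g)$ and each $r\in R$ the transvection $\bw{2}t_{k,l}(ra)$ is a product of $\le 8$ elementary conjugates of $g$ and $g^{-1}$; multiplying the $N$ expressions then gives the bound $8\big(\binom{n}{2}^2-1\big)$.

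For the extraction, fix a generator $a$ and consider square-zero matrices of the form $A=g^{\pm 1}\varepsilon\, g^{\mp 1}-1$, where $\varepsilon=\bw{2}t_{p,q}(\ast)$ is a root element; note $\varepsilon-1$ is square-zero because the supports of the commuting transvections making up $\varepsilon$ never chain, and $A$ involves one row of $g$ and one row of $g^{-1}$ linearly. Using the identity
$$[1+A,\,1+B]=1+AB-BA+BAB-ABA+ABAB\qquad(A^2=B^2=0)$$
with $B$ a second root element, and iterating once more when needed, one produces an element of $\bw{2}\GL_n(R)$ whose deviation from $1$ is a sum of rank-one bivector pieces, each carrying a matrix coefficient of $g$ or $g^{-1}$. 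This is where the column-stabilization theorems are used: after conjugating $g$ by a short product of root elements we may assume that the column of $g$ (or $g^{-1}$) feeding into $A$ is the standard basis bivector $e_{\{1,2\}}$ up to a unit, whereupon all but one of these rank-one pieces collapse and the survivor is exactly $\bw{2}t_{k,l}(ra)$. For pairs $I,J$ with $I\cap J=\varnothing$, which are not root positions of $\A_{n-1}$ on $\bw{2}R^n$ and exist precisely because $n\ge 4$, the entry $g_{IJ}$ is reached in two such steps, that is, as a product of two transvections, still within the budget $8$. The diagonal-difference generators $g_{II}-g_{JJ}$ are treated by the same commutator calculus applied to a conjugate of $g$ in which $I$ and $J$ lie in one $\SL_3$-type subsystem of $\bw{2}\GL_n(R)$, so that a single root element of weight $\epsilon_i-\epsilon_j$ manufactures the difference. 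Counting the factors $\varepsilon g^{\pm 1}\varepsilon^{-1}$ in these at-most-double commutators, done once for $g$ and once for $g^{-1}$, keeps the cost at $\le 8$ per generator.

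The main obstacle is the column-stabilization theorem itself. A column of an arbitrary matrix in $\GL_{\binom{n}{2}}(R)$ cannot be reduced to a basis vector without stable-rank hypotheses, so one must exploit the extra structure of $\bw{2}\GL_n(R)$: that each of its columns is a bivector satisfying the short Pl\"ucker relations and --- for even $n$, and in particular for the exceptional value $n=4$ where the Pl\"ucker description degenerates --- is isotropic for the invariant form $f$. The delicate point is to show, uniformly over every commutative ring and with an explicit bound on the number of root elements used, that such a column is carried to $e_{\{1,2\}}$ by $\bw{2}\E_n(R)$; this is where $n\ge 4$ and the structural facts recalled in the introduction are genuinely needed, and it is the step I expect to dominate the proof. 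Once the stabilization is available in explicit length, the commutator identities above are formal, and assembling the per-generator estimates yields the asserted bound $8\big(\binom{n}{2}^2-1\big)$.
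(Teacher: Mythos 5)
There is a genuine gap, and it sits exactly where you locate the ``main obstacle.'' Your extraction step requires that, after conjugating by a short product of root elements, the relevant column of $g$ becomes the standard basis bivector $e_{\{1,2\}}$ up to a unit, and you propose to prove that $\bw{2}\E_n(R)$ carries any such column to $e_{\{1,2\}}$ uniformly over all commutative rings. That statement is false in general: transitivity of the elementary group on unimodular (even decomposable, Pl\"ucker) columns fails over arbitrary commutative rings without stable rank hypotheses, so no amount of exploiting the Pl\"ucker relations or the invariant form will rescue it. The paper's stabilization theorems assert something much weaker and of a different nature: for a given column $w$ they exhibit an explicit short product $T\in\bw{2}\E_n(R)$ (built from the entries of $w$ themselves, e.g.\ $T_{*,j}=\prod_{s\neq j}\bw{2}t_{s,j}(\sign(s,j)w_{sj})$) with $T\cdot w=w$. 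This is used not to normalize the column but to force $h:=g^{-1}Tg$ into the parabolic subgroup $P_{12}$ (and, via the Pl\"ucker consequence of Lemma~\ref{equat}, into a submaximal parabolic), after which the identity $[T^{-1}h,\bw{2}t_{2,3}(1)]^{T^{-1}}=[h,\bw{2}t_{2,3}(1)]\cdot[\bw{2}t_{2,3}(1),T]$ and a second commutator inside the abelian unipotent radical extract $\bw{2}t_{2,3}(g_{13,12})$ as a product of eight elementary conjugates; monomial conjugation (Lemma~\ref{monomial}) then moves the indices. Your square-zero commutator calculus could in principle replace this, but as written it is anchored to a reduction step that cannot be carried out.

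A secondary problem is the bookkeeping. You claim a uniform budget of $8$ conjugates per generator of $\level(g)$, including the entries $g_{I,J}$ with $\height(I,J)=0$ (``two such steps, still within the budget $8$'') and the diagonal differences $g_{I,I}-g_{J,J}$; neither claim is substantiated, and the paper's own counts are $16$ for height-zero entries and $24$, resp.\ $48$, for the diagonal differences (obtained by applying the height-one case to $g^{\bw{2}t_{i,j}(\mp1)}$ and splitting off the extra terms additively). If you intend to derive the global bound $8\bigl(\binom{n}{2}^2-1\bigr)$ by summing $8$ over the $\binom{n}{2}^2-1$ generators, you must actually prove the cost-$8$ claim for every type of generator, or else redo the aggregation with the correct per-generator constants; at present this step is asserted, not proved.
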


A key step in the proofs of all such results is the stabilization theorems for a column of an arbitrary matrix $g\in G(\Phi,R)$. Therefore, we have to show that an analogous result is true in the case of $\bw{2}\GL_n(R)$. The following result is not astounding, because many experts believed in this fact in the early 2000s.
\begin{theorem}
Let $w$ be any column of a matrix in $\bw{2}\GL_n(R)$, $n\geq 5$. Set $T_1:=\bw{2}t_{2,3}(w_{45})\bw{2}t_{2,4}(-w_{35})\bw{2}t_{2,5}(w_{34})$. Then $T_1\cdot w=w$.
\end{theorem}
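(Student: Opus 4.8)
The plan is to reduce the identity $T_1\cdot w=w$ to the short Plücker relations satisfied by $w$. First I would record the explicit action of an elementary generator on Grassmann coordinates: for $k\neq l$ the matrix $\bw{2}t_{k,l}(\xi)$ fixes every coordinate $w_H$ with $l\notin H$, fixes $w_{\{k,l\}}$, and replaces $w_{\{l,m\}}$ by $w_{\{l,m\}}\pm\xi\, w_{\{k,m\}}$ for each $m\neq k,l$, the sign depending only on the chosen linear ordering of $\bw{2}[n]$. In particular each of the three factors of $T_1$, all of which have $k=2$, modifies only the coordinates $w_{\{2,m\}}$ with $m\neq 2$ and leaves untouched every $w_H$ with $2\notin H$; moreover the coordinates $w_{\{3,m\}},w_{\{4,m\}},w_{\{5,m\}}$ that are fed into $w_{\{2,m\}}$ are themselves never altered (they do not involve the index $2$). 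Hence the three factors commute on $w$ and $T_1\cdot w$ may be computed one coordinate at a time, the three contributions to a given $w_{\{2,m\}}$ simply adding up.

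Carrying out this computation, with the conventions $w_{\{i,i\}}=0$ and $w_{\{j,i\}}=-w_{\{i,j\}}$, the increment of $w_{\{2,m\}}$ equals, up to one global sign,
\[
w_{45}\,w_{\{3,m\}}-w_{35}\,w_{\{4,m\}}+w_{34}\,w_{\{5,m\}}.
\]
I then claim this vanishes for every $m\neq2$. For $m\notin\{3,4,5\}$ the displayed expression is, after matching signs, precisely the short Plücker polynomial $f_{\{m\},\{3,4,5\}}$ evaluated at $w$. Since $\bw{2}\GL_n(R)$ is, for $n\geq3$, $n\neq4$, the stabilizer of $\Plu(n,R)$, and the standard basis bivectors are decomposable, applying a matrix from $\bw{2}\GL_n(R)$ to such a basis vector again yields a point on which all short Plücker polynomials vanish; the column $w$ is one of these images, so the increment is $0$. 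For $m\in\{3,4,5\}$ one of the three terms drops out and the remaining two cancel by antisymmetry of the coordinates, e.g. for $m=3$ the expression becomes $-w_{35}w_{\{4,3\}}+w_{34}w_{\{5,3\}}=w_{35}w_{34}-w_{34}w_{35}=0$. Thus no coordinate of $w$ is changed, i.e. $T_1\cdot w=w$.

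The step that requires genuine care is the sign bookkeeping: one must pin down an explicit ordering of $\bw{2}[n]$, verify the sign in the action of $\bw{2}t_{k,l}(\xi)$, and confront it with the signs occurring in the generator $f_{\{m\},\{3,4,5\}}$, so as to confirm that the prescribed coefficients $w_{45}$, $-w_{35}$, $w_{34}$ are exactly the ones that turn the increment into a Plücker relation rather than into some other quadratic combination. Once the ordering is fixed this is a finite, if slightly tedious, check, and I expect it to be the only real obstacle. The hypothesis $n\geq5$ is used twice: the relation involves the five distinct indices $1,2,3,4,5$ (equivalently a coordinate $w_{\{1,2\}}$ together with the triple $\{3,4,5\}$), and the identification of $\bw{2}\GL_n(R)$ with the stabilizer of $\Plu(n,R)$ is available only away from $n=4$.
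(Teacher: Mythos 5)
Your argument is correct and essentially coincides with the paper's proof: the three factors of $T_1$ change only the coordinates $w_{\{2,m\}}$, the three contributions add up, and each increment is either trivially zero (for $m\in\{3,4,5\}$) or the short Pl\"ucker polynomial $w_{45}w_{3m}-w_{35}w_{4m}+w_{34}w_{5m}$, which vanishes since the column $w$ satisfies the Pl\"ucker relations; the sign bookkeeping you defer does work out exactly as you predict, as one checks directly from the factorization of $\bw{2}t_{i,j}(\xi)$ in Proposition~\ref{ImageOfTransvFor2}. One small slip to fix: your opening general description of the action of $\bw{2}t_{k,l}(\xi)$ has the roles of $k$ and $l$ interchanged --- for the column action it is the coordinates $w_{\{k,m\}}$ that are replaced by $w_{\{k,m\}}\pm\xi\,w_{\{l,m\}}$, while every $w_H$ with $k\notin H$ (and also $w_{\{k,l\}}$) is fixed --- but your actual computation uses the correct version (coordinates containing $2$ are modified, fed by those containing $3,4,5$), so the proof itself is unaffected.
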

We could use this result for the deal, but the rank $n$ should be at least $5$. Besides, the vector $w$ is not arbitrary --- we require that the Pl\"ucker relations hold. So we have a natural question: is there a transvection $T$ in $\bw{2}\E_n(R)$ such that $T$ stabilizes an arbitrary column of a matrix in $\GL_{\binom{n}{2}}(R))$? The following result gives a positive answer.

\begin{theorem}
Let $w$ be an arbitrary vector in $R^{\binom{n}{2}}$, $n\geq 3$. Set $T_{*,j}:=\prod\limits_{s\neq j}\bw{2}t_{s,j}(\sign(s,j)w_{sj})$, $j\in\{1,\dots, n\}$, $\sign(s,j)$ are the structure constants for $\bw{2}\E_n(R)$. Then $T_{*,j}\cdot w=w$.
\end{theorem}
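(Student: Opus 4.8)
The plan is to reduce the identity $T_{*,j}\cdot w=w$ to a single, order-independent sign cancellation; as the statement suggests, this goes through for an \emph{arbitrary} $w$, with no Pl\"ucker relation invoked. Throughout write $\epsilon(p,q)$ for the sign determined by $e_p\wedge e_q=\epsilon(p,q)\,e_{\{p,q\}}$, i.e.\ $\epsilon(p,q)=1$ if $p<q$, $\epsilon(p,q)=-1$ if $p>q$, and $\epsilon(p,p)=0$. First I would record the matrix of one factor: unwinding the definition of $\bw{2}$ on $t_{s,j}(\xi)=e+\xi e_{sj}$, a short computation gives
$$\bw{2}t_{s,j}(\xi)=e+\xi\sum_{c\neq s,j}\epsilon(j,c)\epsilon(s,c)\,e_{\{s,c\},\{j,c\}},$$
where $e_{A,B}$ denotes the matrix unit on the Grassmann basis $\{e_H:H\in\bw{2}[n]\}$. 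In particular $\bw{2}t_{s,j}(\xi)$ fixes $e_H$ whenever $j\notin H$, and it sends $e_{\{j,c\}}$ into the span of $e_{\{j,c\}}$ and $e_{\{s,c\}}$.

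Next I would collapse the product. For $s\neq s'$ the composite $e_{\{s,c\},\{j,c\}}\,e_{\{s',c'\},\{j,c'\}}$ is nonzero only if $\{j,c\}=\{s',c'\}$, which cannot happen since $j\notin\{s',c'\}$. Hence the factors of $T_{*,j}$ pairwise commute, all terms of degree $\geq 2$ in the expansion vanish, and, independently of the order of the product,
$$T_{*,j}=e+\sum_{s\neq j}\sign(s,j)\,w_{sj}\sum_{c\neq s,j}\epsilon(j,c)\epsilon(s,c)\,e_{\{s,c\},\{j,c\}}.$$

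Finally I would read off $(T_{*,j}\cdot w)_H$ for each index $H$. If $j\in H$, then no matrix unit $e_{\{s,c\},\{j,c\}}$ with $s,c\neq j$ produces $e_H$, so $(T_{*,j}\cdot w)_H=w_H$ immediately. If $H=\{a,b\}$ with $j\notin H$, only the pairs $(s,c)\in\{(a,b),(b,a)\}$ contribute; using $\epsilon(b,a)=-\epsilon(a,b)$ and that $w_{\{j,b\}}$, $w_{bj}$ denote the same coordinate,
$$(T_{*,j}\cdot w)_{\{a,b\}}=w_{\{a,b\}}+\epsilon(a,b)\big(\sign(a,j)\epsilon(j,b)-\sign(b,j)\epsilon(j,a)\big)w_{aj}w_{bj}.$$
Thus the whole theorem comes down to the universal identity $\sign(a,j)\epsilon(j,b)=\sign(b,j)\epsilon(j,a)$ for all distinct $a,b,j$ --- equivalently, that $\sign(s,j)/\epsilon(s,j)$ does not depend on $s$ --- which is exactly the normalization built into the structure constants of $\bw{2}\E_n(R)$; for the choice $\sign(s,j)=\epsilon(s,j)$, say, one has $\epsilon(a,j)\epsilon(j,b)-\epsilon(b,j)\epsilon(j,a)=0$ because $\epsilon(a,j)=-\epsilon(j,a)$ and $\epsilon(b,j)=-\epsilon(j,b)$.

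The only real obstacle is bookkeeping: one must fix mutually compatible conventions for the functor $\bw{2}$, the ordering of the Grassmann basis, and the normalization of the structure constants, and then confirm the single sign identity above. There is no conceptual difficulty, and the computation is uniform in $n\geq 3$; the case $n=3$, where each inner sum over $c$ has just one term, already displays the entire argument.
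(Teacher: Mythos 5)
Your proof is correct and follows essentially the same route as the paper: you identify the quadratic correction that $T_{*,j}$ adds to each coordinate $w_{ab}$ with $j\notin\{a,b\}$ and show that its two terms cancel, with coordinates indexed by pairs containing $j$ untouched. The only difference is presentational --- you derive the uniform entry formula for $\bw{2}t_{s,j}(\xi)$, collapse the product explicitly via the vanishing of the nilpotent parts' products, and settle the cancellation by the single identity $\epsilon(a,j)\epsilon(j,b)=\epsilon(b,j)\epsilon(j,a)$ (with $\sign=\epsilon$, which is exactly the paper's normalization), whereas the paper verifies the same cancellation by checking the six possible orderings of $a,b,j$.
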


Let us emphasize the hidden features of the theorem. In addition to the proof of the reverse decomposition of unipotents with a smaller constraint we can use it for other structure problems, e.\,g., for the standard description of overgroups. Ilia Nekrasov and the author also work in this direction to complete a solution of this general problem, see~\cite{LubNekoverExPowArx}.

The present paper is organized as follows. In Section~\ref{princnot} we recall the basic definitions pertaining to elementary groups and the second fundamental representation. Stabilization theorems~\ref{stabil2} and~\ref{stabil} are discussed in Section~\ref{stab}. The core of the paper is Section~\ref{RDU}, which is devoted directly to the proof of Theorem~\ref{RDT}.

\textbf{Acknowledgment.} I am very grateful to my teachers Nikolai Vavilov and Alexei Stepanov for their constant support during the writing of the present paper. I also thank Ilia Nekrasov for thoughtful reading of the original manuscript and  the referee for useful notes in a preliminary version of this paper.

\section{Notation}\label{princnot}
One can find many details relating to Chevalley groups over rings and further references in~\cite{StepVavDecomp}. Here we only fix the main notation.

First, let $G$ be a group. By a commutator of two elements we always mean the \textit{left-normed} commutator $[x,y]=xyx^{-1}y^{-1}$, where $x,y\in G$. By ${}^xy=xyx^{-1}$ and $y^x=x^{-1}yx$ we denote the left and the right conjugates of $y$ by $x$, respectively. In the sequel, we will also use the notation $y^{\pm x}$ for the the right conjugates of $y$ by $x$ or $x^{-1}$.

Let $R$ be an associative ring with 1. By default, it is assumed to be commutative. By $\GL_n(R)$ we denote the general linear group, while $\SL_{n}(R)$ is the special linear group of degree $n$ over $R$. Let $g=(g_{i,j})$, $1\leq i,j\leq n$ be an invertible matrix in $\GL_n(R)$. Entries of the inverse matrix $g^{-1}$ are denoted by $g'_{i,j}$, where $1\leq i,j\leq n$.

By $t_{i,j}(\xi)$ we denote an elementary transvection, i.\,e., a matrix of the form $t_{i,j}(\xi)=e+\xi e_{i,j}$, $1\leq i\neq j\leq n$, $\xi\in R$. Here $e$ denotes the identity matrix and $e_{i,j}$ denotes a standard matrix unit, i.\,e., a matrix that has 1 at the position $(i,j)$ and zeros elsewhere. Hereinafter, we use (without any references) the standard relations among elementary transvections such as
\begin{enumerate}
\item additivity:
$$t_{i,j}(\xi)t_{i,j}(\zeta)=t_{i,j}(\xi+\zeta).$$
\item the Chevalley commutator formula:
$$[t_{i,j}(\xi),t_{h,k}(\zeta)]=
\begin{cases}
e,& \text{ if } j\neq h, i\neq k,\\
t_{i,k}(\xi\zeta),& \text{ if } j=h, i\neq k,\\
t_{h,j}(-\zeta\xi),& \text{ if } j\neq h, i=k.
\end{cases}$$
\end{enumerate}

By $R^n$ we denote the free $R$--module. It contains columns with components in $R$. Vectors $e_1,\dots,e_n$ denote the standard basis of $R^n$. Let $P_m$ be a parabolic subgroup of the coordinate subspace $\langle e_1,\dots,e_m\rangle$. It equals the stabilizer $\Stabi(\langle e_1,\dots,e_m\rangle)$. Further, let $U_m$ be the subgroup of $P_m$, generated by $t_{i,j}(\xi)$, where $1\leq i\leq m$, $m+1\leq j\leq n$, $\xi\in R$. It is called the uniponet radical of $P_m$. Clearly, $U_m$ is normal and abelian.

Now, let $I$ be an ideal in $R$. Denote by $\E_n(I)$ the subgroup of $\GL_{n}(R)$ generated by all elementary transvections of level $I$: $t_{i,j}(\xi), 1\leq i\neq j\leq n, \xi\in I$. In the primary case $I = R$, the group $\E_n(R)$ is called the [absolute] elementary group. It is well known (due to Andrei Suslin~\cite{SuslinSerreConj}) that the elementary group is normal in the general linear group $\GL_{n}(R)$ for $n \geq 3$. Further, the relative elementary subgroup $\E_n(R,I)$ of level $I$ is defined as the normal closure of $\E_n(I)$ in the absolute elementary subgroup $\E_n(R)$:
$$\E_n(R,I)=\langle t_{i,j}(\xi), 1\leq i\neq j\leq n, \xi\in I\rangle^{\E_n(R)}.$$

Let $[n]$ be the set $\{1,2,\ldots, n\}$ and let $I=\{i_1,i_2\},i_1\neq i_2$ be a subset of $[n]$. Define an exterior square of $[n]$. Elements of this set are ordered subsets $I\subseteq [n]$ of cardinality $2$ without repeating entries:
$$\bw{2}[n] := \{ (i_{1}, i_{2})\; |\; i_{j} \in [n], i_{1} \neq i_{2} \}.$$ 
Usually we record $i_j$ in ascending order, $i_1<i_2$. Let $\sign(i_1,i_2):=+1$, if $i_1<i_2$ and $\sign(i_1,i_2):=-1$, if $i_1>i_2$.

Let $I, J$ be two elements of $\bw{2}[n]$. We define a \textit{height} of  the  pair $(I,J)$ as the cardinality of the intersection $I\cap J$: $\height(I,J):=|I\cap J|$. This combinatorial characteristic plays the same role as the distance function $d(\lambda, \mu)$ for roots $\lambda$ and $\mu$ in a weight graph in root system terms.

Let $R$ be a commutative ring, $n\geq 3$, and $N$ be the binomial coefficient $\binom{n}{2}$. We consider the standard action of the group $\GL_{n}(R)$ on the $R$--module $R^n$. Let us define the exterior square $\bw{2}(R^n)$ of the free $R$--module $R^n$ as follows.
The standard basis of $\bw{2}(R^n)$ consists of exterior products $e_i\wedge e_j$, $1~\leq~i~<~j~\leq~n$. However, we also define $e_i\wedge e_j$ for arbitrary pair $i,j$ with $e_i\wedge e_j=-e_j\wedge e_i$. The standard action of the group $\GL_{n}(R)$ on the module $\bw{2}(R^n)$ is defined on the basis elements as follows
$$g(e_i\wedge e_j):=(ge_i)\wedge(ge_j) \text{ for any } g\in \GL_{n}(R) \text{ and } 1\leq i\neq j\leq n.$$
This action is extended by linearity to the whole module $\bw{2}(R^n)$. We define a subgroup $\bw{2}\big(\GL_{n}(R)\big)$ of the general linear group $\GL_{N}(R)$ as the image of $\GL_n(R)$ under the action morphism.

In other words, let us consider the Cauchy--Binet homomorphism
$$\bw{2}:\GL_{n}(R)\longrightarrow \GL_{N}(R),$$
mapping a matrix $x\in \GL_{n}(R)$ to the matrix $\bw{2}(x)\in \GL_{N}(R)$. Entries of $\bw{2}(x)\in \GL_{N}(R)$ are the second order minors of the matrix $x$. Then the group $\bw{2}\big(\GL_{n}(R)\big)$ is the image of the general linear group under the Cauchy--Binet homomorphism. It is natural to index entries of the matrix $\bw{2}(x)$ by a pair of elements of the set $\bw{2}[n]$:
$$\left(\bw{2}(x)\right)_{I,J}=\left(\bw{2}(x)\right)_{(i_1,i_2),(j_1,j_2)}:=M_{i_1,i_2}^{j_1,j_2}(x) = x_{i_1, j_1}\cdot x_{i_2, j_2} - x_{i_1, j_2}\cdot x_{i_2, j_1},$$
where $M_{I}^{J}(x)$ is the determinant of a submatrix formed by rows from the set $I$ and columns from the set $J$. 

Here and further we use the notation $t_{I,J}(\xi)$ for elementary transvections in the elementary group $\E_N(R)$. We write indices $I,J$ without any brackets in ascending order, e.\,g., $t_{12,13}(\xi)$ is a transvection with the entry $\xi$ at the intersection of the first row and the second column. Suppose $x\in \E_n(R)$; then the exterior square of $x$ can be presented as a product of elementary transvections $t_{I,J}(\xi)\in \E_N(R)$ with $I,J\in \bw{2}[n]$ and $\xi\in R$. The following result can be extracted from the very definition of $\bw{2}\big(\GL_{n}(R)\big)$, see~\cite[Proposition 2]{LubNekoverExPowArx}.

\begin{prop} \label{ImageOfTransvFor2}
Let $t_{i,j}(\xi)$ be an elementary transvection. For $n\geq 3$, matrix $\bw{2}t_{i,j}(\xi)$ can be presented as the following product:
$$\bw{2}t_{i,j}(\xi)=\prod\limits_{k=1}^{i-1} t_{ki,kj}(\xi)\,\cdot\prod\limits_{l=i+1}^{j-1}t_{il,lj}(-\xi)\,\cdot\prod\limits_{m=j+1}^n t_{im,jm}(\xi) \eqno(1)$$
for any $1\leq i<j\leq n$.
\end{prop}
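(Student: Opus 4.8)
The plan is to compute $\bw{2}t_{i,j}(\xi)$ straight from the action on the standard basis of $\bw{2}(R^n)$ and to match the outcome entrywise with the right-hand side of $(1)$. As an operator on $R^n$ the transvection fixes every $e_k$ with $k\neq j$ and sends $e_j\mapsto e_j+\xi e_i$; accordingly I would evaluate $t_{i,j}(\xi)(e_a\wedge e_b)=(t_{i,j}(\xi)e_a)\wedge(t_{i,j}(\xi)e_b)$ on each ascending basis bivector $e_a\wedge e_b$. When $j\notin\{a,b\}$ both factors are fixed, so the column indexed by $(a,b)$ stays equal to $e_a\wedge e_b$; thus only bivectors containing the index $j$ can produce nontrivial columns, namely
\begin{gather*}
t_{i,j}(\xi)(e_a\wedge e_j)=e_a\wedge e_j+\xi\,e_a\wedge e_i\quad(a<j,\ a\neq i),\\
t_{i,j}(\xi)(e_j\wedge e_b)=e_j\wedge e_b+\xi\,e_i\wedge e_b\quad(j<b),\\
t_{i,j}(\xi)(e_i\wedge e_j)=e_i\wedge e_j .
\end{gather*}
The last line shows that the column $(i,j)$ is fixed, since $e_i\wedge e_i=0$.

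The first genuine step is the sign bookkeeping, which is the one point that must be handled with care. Rewriting each extra term in ascending form through $e_p\wedge e_q=-e_q\wedge e_p$, I split the first family according to the position of $a$: for $a<i$ the term $e_a\wedge e_i$ is already ascending and contributes the entry $\xi$ to the column $(a,j)$ in row $(a,i)$, whereas for $i<a<j$ one has $e_a\wedge e_i=-e_i\wedge e_a$, contributing $-\xi$ in row $(i,a)$; the second family contributes $\xi$ to the column $(j,b)$ in row $(i,b)$, since $e_i\wedge e_b$ is already ascending. After the substitutions $k=a$ (with $a<i$), $l=a$ (with $i<l<j$), and $m=b$ (with $b>j$), these are exactly the matrix units occurring in the three factors $\prod_{k<i}t_{ki,kj}(\xi)$, $\prod_{i<l<j}t_{il,lj}(-\xi)$, and $\prod_{j<m}t_{im,jm}(\xi)$ of $(1)$.

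It then remains to see that the product on the right of $(1)$ collapses to $e$ plus the sum of these matrix units. Here I would observe that every factor has a row index containing $i$ and a column index containing $j$; since $i\neq j$, the only $2$-subset containing both is $(i,j)$, and $(i,j)$ appears neither as a row nor as a column index in $(1)$. Hence no column index of one factor equals the row index of another, so by the Chevalley commutator formula all the factors pairwise commute and each cross term $e_{I,J}e_{I',J'}=\delta_{J,I'}e_{I,J'}$ vanishes; by induction the whole product equals $e+\sum\xi_{I,J}e_{I,J}$. Finally, the diagonal of $\bw{2}t_{i,j}(\xi)$ is trivial, since the only nonzero off-diagonal entry of $t_{i,j}(\xi)$ lies strictly above the diagonal, so the relevant principal $2\times2$ minor equals $1\cdot 1-(t_{i,j}(\xi))_{ab}\cdot 0=1$. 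Combining these computations yields $(1)$.

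In short, the only delicate ingredient is the antisymmetry-induced sign that distinguishes the middle family (entry $-\xi$) from the outer two families (entry $+\xi$); once the ordered-index conventions are pinned down, both the identification with the three factors and the commutation argument are entirely routine.
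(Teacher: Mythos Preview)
Your argument is correct and is exactly the direct verification the paper alludes to: the paper does not supply a proof of this proposition but simply remarks that the formula ``can be extracted from the very definition of $\bw{2}\big(\GL_{n}(R)\big)$'' and points to an external reference. Your computation of the action on the basis bivectors $e_a\wedge e_b$, together with the observation that all factors on the right of $(1)$ pairwise commute because every row index contains $i$ and every column index contains $j$ while $\{i,j\}$ itself never occurs among them, is precisely that extraction.
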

As an example, $\bw{2}t_{1,3}(\xi)=t_{12,23}(-\xi)t_{14,34}(\xi)t_{15,35}(\xi)$ for $n=5$.

\section{Stabilization theorems}\label{stab}
Let $G=G(\Phi,R)$ be the simply connected Chevalley group of type $\Phi$ over a commutative ring $R$, where $\Phi$ is an irreducible root system of rank $\geq 2$. $E(\Phi,R)$ denotes the elementary subgroup spanned by all elementary generators $x_{\alpha}(\xi)$, $\alpha\in\Phi$, $\xi\in R$.

A crucial move to use the decomposition of unipotents for solving problems in various fields is a trick to stabilize a column [row] of an arbitrary matrix $g\in G(\Phi,R)$. For the general linear group, the orthogonal group, or other Chevalley groups in the natural representations this trick can be found in the paper~\cite{StepVavDecomp} by Alexei Stepanov and Nikolai Vavilov. Therefore, we present the similar maneuver for the general linear group in the bivector representation. In this section, we construct two special elements $T_{*,j}$ and $T_1$ in $\bw{2}\E_n(R)$ stabilizing an arbitrary column of a matrix $g\in\GL_N(R)$ or $g\in\bw{2}\GL_n(R)$ respectively.

\setcounter{theorem}{2}
\begin{theorem}\label{stabil}
Let $w$ be an arbitrary vector in $R^N$, $n\geq 3$. Set $T_{*,j}:=\prod\limits_{s\neq j}\bw{2}t_{s,j}(\sign(s,j)w_{sj})$, where $j$ is in $[n]$. Then $T_{*,j}\cdot w=w$.
\end{theorem}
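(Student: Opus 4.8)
The plan is to recognize that $T_{*,j}$, though written as a product of elementary transvections of $\E_N(R)$, is in fact the $\bw{2}$-image of a single unipotent matrix of $\E_n(R)$, and then to read off its action on a bivector by a direct computation. First I would observe that for a fixed $j$ the elementary transvections $t_{s,j}(\,\cdot\,)$ with $s\neq j$ pairwise commute in $\GL_n(R)$ --- this is immediate from the Chevalley commutator formula, since $[t_{s,j}(\xi),t_{s',j}(\zeta)]=e$ whenever $s,s'\neq j$ and $s\neq s'$. Hence their images $\bw{2}t_{s,j}(\,\cdot\,)$ pairwise commute as well (images of commuting elements under the homomorphism $\bw{2}$), the order of the factors in $T_{*,j}$ is immaterial, and the product collapses:
$$T_{*,j}=\prod_{s\neq j}\bw{2}t_{s,j}(\sign(s,j)w_{sj})=\bw{2}\Big(\prod_{s\neq j}t_{s,j}(\sign(s,j)w_{sj})\Big)=\bw{2}u,$$
where $u:=e+\sum_{s\neq j}\sign(s,j)w_{sj}\,e_{s,j}\in\E_n(R)$. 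This $u$ fixes every standard basis vector $e_i$ with $i\neq j$ and sends $e_j$ to $e_j+v$, where $v:=\sum_{s\neq j}\sign(s,j)w_{sj}\,e_s=\sum_{s<j}w_{sj}\,e_s-\sum_{s>j}w_{js}\,e_s$ (for $s>j$ we read $w_{sj}$ as the component $w_{js}$); in particular the $j$-th coordinate of $v$ vanishes, so $v\wedge v=0$ over the commutative ring $R$.

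Next I would compute in $\bw{2}(R^n)$. Identifying $R^N$ with $\bw{2}(R^n)$ via the standard basis, write $w=\sum_{a<b}w_{ab}(e_a\wedge e_b)$. Since $u$ acts by $ux=x+x_j v$, where $x_j$ denotes the $j$-th coordinate of $x$, and $v\wedge v=0$, we get $\bw{2}u\,(x\wedge y)=(x+x_j v)\wedge(y+y_j v)=x\wedge y+v\wedge(x_j y-y_j x)$ for all $x,y\in R^n$. Applying this termwise to $w$ and using $(e_a)_j=\delta_{a,j}$, only the summands with $j\in\{a,b\}$ produce a correction, so that
$$T_{*,j}\cdot w=\bw{2}u\cdot w=w+v\wedge\Big(\sum_{b>j}w_{jb}\,e_b-\sum_{a<j}w_{aj}\,e_a\Big).$$
Comparing the vector in parentheses with the formula for $v$ above --- unwinding $\sign(s,j)=+1$ for $s<j$ and $-1$ for $s>j$ --- one sees it equals $-v$, so the correction is $v\wedge(-v)=0$ and $T_{*,j}\cdot w=w$.

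I do not expect a genuine obstacle: once $T_{*,j}$ is identified with $\bw{2}u$, the statement reduces to the identity $v\wedge v=0$, valid over any commutative ring. The one point that needs care is the sign bookkeeping in the final comparison --- the entire role of the structure constants $\sign(s,j)$ in the definition of $T_{*,j}$ is precisely to make the correction vector a scalar multiple of $v$, which forces the wedge to vanish. It is worth emphasizing that the argument works for every $n\geq 3$ and every commutative ring $R$ with $w$ completely arbitrary, which is exactly the gain over the column-stabilization statement for $\bw{2}\GL_n(R)$, where one needs $n\geq 5$ and the Pl\"ucker relations on $w$.
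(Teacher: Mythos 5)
Your proof is correct, and it takes a genuinely different route from the paper. The paper proves Theorem~\ref{stabil} by brute force in the coordinates of $\E_N(R)$: it expands the action of $T_{*,j}$ on $w$, observes that each entry $w_{pq}$ with $p,q\neq j$ is changed by the quadratic expression $z(p,q,j)=\sign(pq,jq)\sign(p,j)w_{pj}w_{jq}+\sign(qp,jp)\sign(q,j)w_{qj}w_{jp}$, and then checks in six separate sign cases (all relative orderings of $p,q,j$) that $z(p,q,j)=0$. You instead pull the whole computation back to $\GL_n$: since the transvections $t_{s,j}$ with a common column index commute, $T_{*,j}=\bw{2}u$ for the single unipotent $u=e+\sum_{s\neq j}\sign(s,j)w_{sj}e_{s,j}$, whose action is $x\mapsto x+x_jv$, and the entire statement collapses to $v\wedge v=0$ over a commutative ring. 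Your identification of the matrix $\bw{2}(u)$ with the induced action on $\bw{2}(R^n)$ and the linearity argument that lets you apply the decomposable formula termwise to an arbitrary $w\in R^N$ are both legitimate, and your reading of $w_{sj}$ for $s>j$ as the coordinate of the unordered pair matches the paper's conventions (its own case analysis treats $w_{pj}w_{jq}$ and $w_{qj}w_{jp}$ as the same product, with signs carried separately). What your argument buys is conceptual economy: the sign bookkeeping disappears into the definition of $v$, and it becomes transparent that the structure constants $\sign(s,j)$ are exactly what is needed to make the correction a multiple of $v$. What the paper's entrywise computation buys is explicit information about which coordinates of $w$ are touched and by which Pl\"ucker-type expressions, which is the bookkeeping reused in the weight-diagram argument for $T_1$ (Theorem~\ref{stabil2}) and in the commutator calculus of Section~\ref{RDU}. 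Both arguments are valid for every $n\geq 3$, every commutative ring, and arbitrary $w$.
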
 

\begin{remark}
Similarly, the element $T_{i,*}:=\prod\limits_{s\neq i}\bw{2}t_{i,s}(\sign(i,s)z_{is})$, $i\in[n]$, stabilizes an arbitrary row $z\in{}^N\!R$, i.\,e., $z\cdot T_{i,*}=z$.
\end{remark}

To uncover {\it the idea of the proof} and {\it forlmulae appearing} in Thearem~\ref{stabil} the language of weight diagrams be useful. Here we recall the necessary portion of it. The exterior square of an elementary group $\bw{2}\E_n(R)$ corresponds to the representation with the highest weight $\varpi_2$ of Chevalley group of the type $A_{n-1}$. The weight diagram in this case is a half of the square with $(n-1)$ vertices, see~\cite[Figure 5]{atlas} and Figure~\ref{Fig1}. The action of the elementary group $\E_N(R)$ on any vector $w\in R^N$ can be presented on the diagram. An elementary transvection $t_{I,J}(\xi)$ is a root unipotent for $\E_N(R)$. So, $t_{I,J}(\xi)$-action is an addition of $J$th coordinate to $I$th coordinate of $w$ with the multiplier $\xi$: $t_{I,J}(\xi)w=w+\xi w_{J}e_{I}$. For $n=5$ the action $t_{12,13}(\xi)$ on the weight diagram looks as follows:

\centerline{
\def\objectstyle{\scriptscriptstyle}
\xymatrix @=2pc{
&&&*+[o][F-]{15}\ar@{-}[dr]^1 \ar@{-}[dl]_4\\
&&*+[o][F-]{14}\ar@{-}[dr] \ar@{-}[dl]_3&&*+[o][F-]{25}\ar@{-}[dl]\ar@{-}[dr]^2\\
*+[o][F-]{12}&*+[o][F-]{13}\ar@{-->}@/^1pc/[l]^-{\xi}\ar@{-}[l]_2\ar@{-}[dr]_1 &&*+[o][F-]{24}\ar@{-}[dl] \ar@{-}[dr]&&*+[o][F-]{35}\ar@{-}[dl]^4\ar@{-}[r]^3&*+[o][F-]{45}\\
&&*+[o][F-]{23}&&*+[o][F-]{34}}}

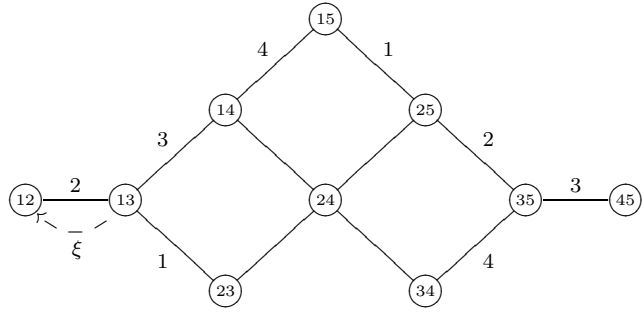
\captionof{figure}{The weight diagram $(A_4,\varpi_2)$ and the action of $t_{12,13}(\xi)$}\label{Fig1}

By formula~$(1)$ an exterior transvection $\bw{2}t_{i,j}(\xi)$ can be presented as a product of $(n-2)$ elementary transvections. Thus $T_{*,5}$ is a product of $(n-1)\cdot (n-2)$ elementary transvections in $\E_N(R)$. So we present the action of $T_{*,5}$ on the column $w\in R^N$ on this diagram. For the case $n=5$, $T_{*,5}$ is the following product of the root unipotens: $T_{*,5}=x_{\alpha_1+\alpha_2+\alpha_3+\alpha_4}(+w_{15})\cdot x_{\alpha_2+\alpha_3+\alpha_4}(+w_{25})\cdot x_{\alpha_3+\alpha_4}(+w_{35})\cdot x_{\alpha_4}(+w_{45})$. In terms of elementary transvections $T_{*,5}$ equals
\begin{multline*}
t_{12,25}(-w_{15})t_{13,35}(-w_{15})t_{14,45}(-w_{15})\cdot t_{12,15}(+w_{25})t_{23,35}(-w_{25})t_{24,45}(-w_{25})\\
\cdot t_{13,15}(+w_{35})t_{23,25}(+w_{35})t_{34,45}(-w_{35})\cdot t_{14,15}(+w_{45})t_{24,25}(+w_{45})t_{34,35}(+w_{45}).
\end{multline*}
We have shown on Figure~\ref{Fig2} all these transvections. Different types of arrows correspond to different uniponets. Note that the signs in the expression above appear due to the definition of exterior transvections. Namely indices $I=\{i_1,i_2\},J=\{j_1,j_2\}$ are not necessarily in ascending order, so we need to calculate $\sign(i_1i_2,j_1j_2)=\sign(i_1,i_2)\cdot\sign(j_1,j_2)$. Hence, $t_{I,J}(\xi)=t_{\sigma(I),\pi(J)}(\sign(\sigma)\sign(\pi)\xi)$ where $\sigma(I),\pi(J)$ are in ascending order.

\centerline{\xymatrix @=3pc{
&&&\bullet\ar@{.}[dr] \ar@{.}[dl]\ar@/_1pc/[dl]_(.7){w_{45}}\ar@{=>}@/_3pc/[ddll]_(.5){w_{35}}\ar@2{-->}@/_4pc/[ddlll]_(.6){w_{25}}\\
&&\bullet\ar@{.}[dr] \ar@{.}[dl]&&\bullet\ar@{.}[dl]\ar@{.}[dr]\ar@/_1pc/[dl]_(.7){w_{45}}\ar@{=>}@/_3pc/[ddll]_(.7){w_{35}}\ar@{-->}@/_3pc/[dllll]_(.2){-w_{15}}\\
\bullet&\bullet\ar@{.}[l]\ar@{.}[dr]&&\bullet\ar@{.}[dl] \ar@{.}[dr]&&\bullet\ar@{.}[dl]\ar@{.}[r]\ar@/_1pc/[dl]_(.7){w_{45}}\ar@2{-->}@/^5pc/[dlll]_(.8){-w_{25}}\ar@{-->}@/^8pc/[llll]^(.5){-w_{15}}&\bullet\ar@{=>}@/^1pc/[dll]^(.3){-w_{35}}\ar@2{-->}@/_1pc/[lll]_(.7){-w_{25}}\ar@{-->}@/_1pc/[ullll]_(.2){-w_{15}}\\
&&\bullet&&\bullet}}

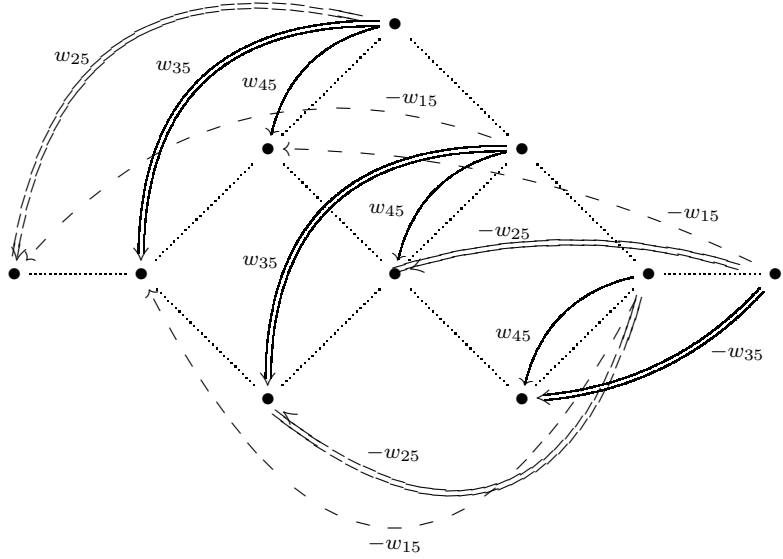
\captionof{figure}{The transvection $T_{*,5}$ on the weight diagram $(A_4,\varpi_2)$: each arrow stands for one elementary transvection $t_{I,J}(\xi)$, one type of the arrows corresponds to one root unipotent $x_{\alpha}(w)$}\label{Fig2}

\begin{proof}[Proof of Theorem $\ref{stabil}$]
The transvection $T_{*,j}$ acts on the vector $w$ by adding the expression
$$z(p,q,j):=\sign(pq,jq)\sign(p,j)w_{pj}w_{jq}+\sign(qp,jp)\sign(q,j)w_{qj}w_{jp}$$
to $\binom{n-1}{2}$ entries $w_{pq}$ of $w$, $pq\in\bw{2}([n]\setminus j)$, i.\,e., $(T_{*,j}w)_{pq}=w_{pq}+z(p,q,j)$. It is necessary to analyze $6$ cases for the numbers $1\leq p,q,j\leq n$. Below we give an analysis of these cases. By the above we must only prove that the expressions $z(p,q,j)$ vanish in all cases. They equal:
\begin{itemize}
    \item $(-1)(+1)w_{pj}w_{jq}+(+1)(+1)w_{qj}w_{jp}=0$ for $p<q<j$;
    \item $(+1)(+1)w_{pj}w_{jq}+(+1)(-1)w_{qj}w_{jp}=0$ for $p<j<q$;
    \item $(+1)(-1)w_{pj}w_{jq}+(-1)(-1)w_{qj}w_{jp}=0$ for $j<p<q$;
    \item $(-1)(-1)w_{pj}w_{jq}+(+1)(-1)w_{qj}w_{jp}=0$ for $j<q<p$;
    \item $(+1)(-1)w_{pj}w_{jq}+(+1)(+1)w_{qj}w_{jp}=0$ for $q<j<p$;
    \item $(+1)(+1)w_{pj}w_{jq}+(-1)(+1)w_{qj}w_{jp}=0$ for $q<p<j$.
\end{itemize}
\end{proof}

Now, let $g$ be a matrix in $\bw{2}\GL_n(R)$, i.\,e., columns $g_{*,J}$ satisfy the Pl\"ucker relations. Of course, $\bw{2}\GL_n(R)$ is a subgroup of $\GL_N(R)$, so we can use the elements $T_{*,j}$ to stabilize columns of $\bw{2}\GL_n(R)$. But $T_{*,j}$ contains $(n-1)$ elementary exterior transvections. For the case of $\bw{2}\GL_n(R)$ we construct the element in $\bw{2}\E_n(R)$ that contains \textit{three} elementary exterior transvections for an arbitrary rank $n$. 

The idea of the construction is the following. Choose one vertex on the weight diagram and choose exterior transvections so that the result of their action on this vertex is a short Pl\"ucker relation. At the same time the result on other vertices is either another Pl\"ucker relation or a trivial Pl\"ucker relation. Since $w$ is a column of a matrix in $\bw{2}\GL_n(R)$, we see that all Pl\"ucker relations equal zero. However, for this method $n$ should be at least $5$.

\setcounter{theorem}{1}
\begin{theorem}\label{stabil2}
Let $w$ be any column of a matrix in $\bw{2}\GL_n(R)$, $n\geq 5$. Set $T_1:=\bw{2}t_{2,3}(w_{45})\bw{2}t_{2,4}(-w_{35})\bw{2}t_{2,5}(w_{34})$. Then $T_1\cdot w=w$.
\end{theorem}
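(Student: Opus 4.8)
The plan is to mimic the proof of Theorem~\ref{stabil}: compute explicitly how $T_1$ acts on each coordinate $w_{pq}$ of the column $w$, show that the resulting correction term is always a $\mathbb{Z}$-linear combination of short Pl\"ucker relations $f_{I,J}(w)$, and then invoke the hypothesis that $w$ is a genuine column of a matrix in $\bw{2}\GL_n(R)$, so that all $f_{I,J}(w)=0$. Since $T_1=\bw{2}t_{2,3}(w_{45})\bw{2}t_{2,4}(-w_{35})\bw{2}t_{2,5}(w_{34})$ is a product of three exterior transvections, and the three transvections $\bw{2}t_{2,3}$, $\bw{2}t_{2,4}$, $\bw{2}t_{2,5}$ add respectively to the rows indexed $2j$ for $j\in\{3,4,5\}$, these three factors \emph{commute} as automorphisms of $\bw{2}R^n$ up to the higher-order terms produced by formula $(1)$; the cleanest bookkeeping is to expand each $\bw{2}t_{2,j}$ via Proposition~\ref{ImageOfTransvFor2} into $n-2$ elementary transvections $t_{I,J}$ and track the cumulative effect coordinate by coordinate, exactly as in Figure~\ref{Fig2}.

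First I would fix the vertex $\{1,2\}$ (or rather note that, per the ``idea of the construction'' paragraph, the designed vertex is one where the three summands assemble into a single short Pl\"ucker relation). Concretely, the transvection $\bw{2}t_{2,j}(\xi)$ acts on $w$ by $w_{pq}\mapsto w_{pq}+\xi\cdot(\text{combination of }w_{p'q'})$ where the affected coordinates are those meeting $2$ and $j$; carrying out formula $(1)$ shows $\bw{2}t_{2,j}(\xi)\colon w_{2k}\mapsto w_{2k}\pm\xi w_{jk}$ and $w_{1j}\mapsto w_{1j}\pm\xi w_{12}$ etc. Feeding $\xi=w_{45}, -w_{35}, w_{34}$ respectively into the three factors, the correction to, say, $w_{23}$ becomes (up to signs) $w_{45}\cdot 0 - w_{35}w_{34}+w_{34}w_{35}$ or an honest relation $w_{45}w_{3k}-w_{35}w_{4k}+w_{34}w_{5k}=\pm f_{\{k\},\{3,4,5\}}(w)$ — which is precisely a short Pl\"ucker relation with $I=\{k\}$, $J=\{3,4,5\}$ when $k\notin\{3,4,5\}$, and is identically zero (a ``trivial Pl\"ucker relation'') when $k\in\{3,4,5\}$. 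I would then do this case analysis over all pairs $pq\in\bw{2}[n]$: those coordinates untouched by any of the three factors are fixed trivially; those touched receive exactly one correction term, which I identify as $\pm f_{I,\{3,4,5\}}(w)$ for a suitable singleton $I$.

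The main obstacle — and the only real content beyond routine sign-chasing — is verifying that the \emph{second-order} cross terms coming from composing the three non-commuting-on-the-nose factors either cancel or again组 assemble into Pl\"ucker relations, rather than producing genuinely new quadratic junk. Because the ``support'' rows of the three transvections are $\{2,3\},\{2,4\},\{2,5\}$, a coordinate $w_{2k}$ is the one place where all three can act in sequence, and one must check that the composite effect on $w_{2k}$ is $\pm(w_{45}w_{3k}-w_{35}w_{4k}+w_{34}w_{5k})$ with no leftover degree-$3$ terms; this works because each factor only alters rows indexed by $2$, so after the first application the coordinates $w_{3k},w_{4k},w_{5k}$ that the later factors read off are unchanged. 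I expect this to reduce the entire verification to the single identity $w_{45}w_{3k}-w_{35}w_{4k}+w_{34}w_{5k}=0$, valid for every $k$ (trivially for $k\in\{3,4,5\}$, by the Pl\"ucker relation $f_{\{k\},\{3,4,5\}}$ for $k\notin\{3,4,5\}$), plus the symmetric identities on the coordinates $w_{1j}$, which come from $f_{\{1\},\{3,4,5\}}$ and its relatives.

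Finally I would remark on why $n\geq 5$ is needed: the relation $f_{\{k\},\{3,4,5\}}$ requires the four distinct indices $k,3,4,5$ to fit inside $[n]$, and the three chosen transvection parameters $w_{45},w_{35},w_{34}$ require the indices $3,4,5$ to exist, so $n\geq 5$ is forced; for $n=3,4$ one falls back on Theorem~\ref{stabil}. I would present the computation in the compact ``action on a coordinate'' style of the proof of Theorem~\ref{stabil}, listing the correction term $z(p,q)$ attached to each $w_{pq}$ and observing in each case that it is (a signed) short Pl\"ucker relation, hence zero on a column of $\bw{2}\GL_n(R)$, which gives $T_1\cdot w=w$.
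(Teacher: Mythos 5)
Your proposal is correct and takes essentially the same approach as the paper: you verify that $T_1$ alters only the coordinates $w_{2i}$, each by a signed short Pl\"ucker polynomial $f_{\{i\},\{3,4,5\}}(w)$ (trivially zero for $i\in\{3,4,5\}$), which vanishes because $w$ is a column of a matrix in $\bw{2}\GL_n(R)$, and your remark that no cross terms arise --- each factor writes only to coordinates containing the index $2$ and reads only coordinates avoiding it --- is exactly what the paper's weight-diagram computation uses implicitly. Only a minor slip in an aside: $t_{12,1j}(\xi)$ sends $w_{12}\mapsto w_{12}+\xi w_{1j}$, not $w_{1j}\mapsto w_{1j}\pm\xi w_{12}$, but this does not affect the argument.
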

\begin{proof}
$T_1$ is a product of unipotens $x_{\alpha_2}(+w_{45})x_{\alpha_2+\alpha_3}(-w_{35})x_{\alpha_2+\alpha_3+\alpha_4}(+w_{34})$, see figure~\ref{Fig3}. This product acts on the vector $w$ by adding Pl\"ucker polynomials to $(n-1)$ entries of $w$. Namely, $(T_1w)_{2i}=w_{2i}+f_{i345}(w)$, where $i\in [n]\setminus 2$. Suppose that $i\in \{3,4,5\}$; then $f_{i345}(w)$ is a trivial polynomial $w_Aw_B-w_Bw_A=0$. Otherwise, $i\in [n]\setminus \{2,3,4,5\}$, $f_{i345}(w)=w_{45}w_{3i}-w_{35}w_{4i}+w_{34}w_{5i}$ is a short Pl\"ucker polynomial. Since entries of the vector $w$ satisfy the Pl\"ucker relations, we see that all polynomials $f_{i345}(w)$  equal zero.
\end{proof}

\centerline{\xymatrix @=2.0pc{
&&&&\bullet\ar@{-}[dr]\ar@{-}[dl]&\ar@{~}[ddddllll]\\
&&&\bullet\ar@{-}[dr] \ar@{-}[dl]\ar@{=>}@/_2pc/[ddlll]_(.3){w_{34}}&&\bullet\ar@{-}[dr]\ar@{-}[dl]\\
&&\bullet\ar@{-->}@/_1pc/[dll]_(.15){-w_{35}}\ar@{-}[dr] \ar@{-}[dl]&&\bullet\ar@{-}[dl]\ar@{-}[dr]&&\bullet\ar@{-}[dl]\ar@{-}[dr]\ar@/_1pc/[ul]_(.2){w_{45}}\\
\bullet&\bullet\ar@{-}[l]\ar@/_1pc/[l]_(.3){w_{45}}\ar@{-}[dr] &&\bullet\ar@{-}[dl] \ar@{-}[dr]&&\bullet\ar@{-}[dl]\ar@{-}[dr]\ar@/_1pc/[ul]_(.23){w_{45}}\ar@{=>}@/^5pc/[dlll]^(.5){w_{34}}&&\bullet\ar@{-}[dl]\ar@{-->}@/_2pc/[uull]_(.2){-w_{35}}\ar@{-}[r]&\bullet\ar@{=>}@/_3pc/[uulll]_(.5){w_{34}}\\
&&\bullet&&\bullet\ar@/_1pc/[ul]_(.3){w_{45}}\ar@{-->}@/^1pc/[ll]_(.5){-w_{35}}&&\bullet\ar@{=>}@/_1pc/[ulll]_(.8){w_{34}}\ar@{-->}@/_2pc/[uull]_(.3){-w_{35}}}}

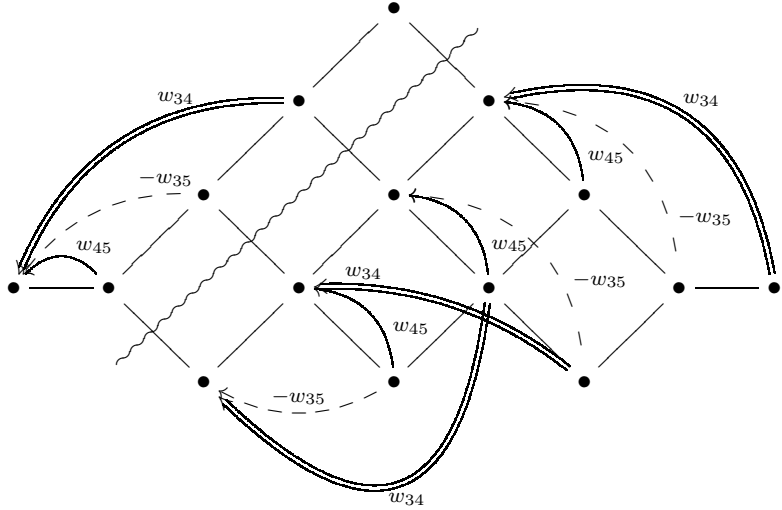
\captionof{figure}{The transvection $T_1$ on the weight diagram $(A_5,\varpi_2)$: the arrows of type ``$\rightarrow$'' correspond to $x_{\alpha_2}(w_{45})$, ``$\dashrightarrow$'' to $x_{\alpha_2+\alpha_3}(-w_{35})$, ``$\Rightarrow$'' to  $x_{\alpha_2+\alpha_3+\alpha_4}(w_{34})$}\label{Fig3}

Now we give a criterion that determine whether a matrix $g\in\GL_N(R)$ belongs to the group $\bw{2}\GL_n(R)$. Denote by $a_{A,C}^H(g)$ the sum $\sum\limits_{B\sqcup D = H} \sign(B,D) g_{B,A} g_{D,C}$, where $H\in\bw{4}[n]$, and $A,C\in\bw{2}[n]$. The next result can be found in~\cite[Theorem 3]{LubNek18}. It is an analog of~\cite[Proposition 4]{VP-Ep}, \cite[Proposition 1]{VP-EOodd}, and \cite[Theorem 5]{VavLuzgE6}.

\begin{prop}\label{criterion}
A matrix $g\in\GL_N(R)$ belongs to $\bw{2}\GL_n(R)$ if and only if the following equations hold:
\begin{itemize}
\item $a_{A,C}^H(g) = 0$ for any $H \in \bw{4}[n]$ and for any $A,C \in \bw{2}[n]$ such that $A\cap C\neq \emptyset$;
\item $\sign(A,C)\cdot a_{A,C}^H(g) = \sign(A',C')\cdot a_{A',C'}^H(g)$ for any $H \in \bw{4}[n]$ and for any $A,C,A',C'\in\bw{2}[n]$ such that $A\cup C=A'\cup B'$ and $A\cap C= \emptyset$.
\end{itemize}
\end{prop}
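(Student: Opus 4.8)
The plan is to prove the criterion of Proposition~\ref{criterion} by decoding what membership in $\bw{2}\GL_n(R)$ means scheme-theoretically and then translating it into vanishing and symmetry relations on the quantities $a_{A,C}^H(g)$. Recall that $\bw{2}\GL_n$ is the image of the group scheme $\GL_n$ under the natural transformation $\bw{2}$, which for $n\neq 4$, $n\geq 3$ is exactly the stabilizer of the Pl\"ucker ideal $\Plu(n,R)$; equivalently (up to the scalar ambiguity recorded in the upper level) it is the subscheme of $\GL_N$ cut out by the semi-invariance of the invariant form $f$ for even $n$ or the ideal of forms $K$ for odd $n$. So the first step is to fix this description: a matrix $g\in\GL_N(R)$ lies in $\bw{2}\GL_n(R)$ if and only if $g$ preserves, up to a unit, the bilinear (for even $n$) or the relevant multilinear data attached to the decomposable bivectors. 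Concretely, the condition is that $g$ sends decomposable bivectors to decomposable bivectors compatibly, which is encoded by the Pl\"ucker quadrics being permuted (semi-invariantly) by $g$.

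Next I would introduce the tensor $a_{A,C}^H(g)=\sum_{B\sqcup D=H}\sign(B,D)\,g_{B,A}\,g_{D,C}$ as the natural "pullback" of a degree-two Pl\"ucker-type expression under $g$. The point is that for $H\in\bw{4}[n]$ the span of the short Pl\"ucker quadrics $f_{I,J}$ with $I\cup J=H$ (equivalently, $|I\cup J|=4$) is a well-understood subspace of the degree-two part of $R[x_H]$, and $g$ acting on coordinates sends $f_{I,J}$ to a linear combination of the $a_{A,C}^H(g)$. The Pl\"ucker ideal being $g$-stable then forces: (i) the components of $a_{A,C}^H(g)$ that lie outside the Pl\"ucker subspace must vanish, which is precisely the first bullet $a_{A,C}^H(g)=0$ when $A\cap C\neq\emptyset$ (these index the "wrong" monomials, the ones that cannot appear in a genuine Pl\"ucker relation since in $f_{I,J}$ the index $I$ is always disjoint-ish in the appropriate sense); and (ii) the surviving components, indexed by $A\cap C=\emptyset$ with $A\cup C=H$, must all be proportional with the combinatorially predicted signs, since up to sign there is essentially one Pl\"ucker relation per $H$ — this is the second bullet $\sign(A,C)a_{A,C}^H(g)=\sign(A',C')a_{A',C'}^H(g)$. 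I would verify the sign bookkeeping by writing out a single representative $H$, say $H=\{1,2,3,4\}$, listing the three relevant splittings into $A\sqcup C$ and checking that the $\sign(A,C)$ factors make the three $a$'s agree; by symmetry under the Weyl group (permutations of $[n]$) this suffices.

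For the converse direction I would argue that if $g$ satisfies both families of equations, then $g$ maps each generator $f_{I,J}$ of $\Plu(n,R)$ into $\Plu(n,R)$: the first bullet kills all monomials $x_{I'}x_{J'}$ in $g\cdot f_{I,J}$ that are not of Pl\"ucker type, and the second bullet guarantees the coefficients of the Pl\"ucker-type monomials assemble into an $R$-multiple of a short Pl\"ucker relation. Hence $g$ stabilizes the ideal, and since for $n\geq 3$, $n\neq 4$ the stabilizer of $\Plu(n,R)$ is exactly $\bw{2}\GL_n(R)$ (as recalled in the introduction, citing~\cite{VavPere}), we conclude $g\in\bw{2}\GL_n(R)$. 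The even/odd-$n$ dichotomy and the role of the upper level only affect the scalar normalization and can be absorbed by working inside $\GL_N$ directly, so they do not obstruct the argument.

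The main obstacle I anticipate is the sign and normalization bookkeeping in step two: matching $\sign(B,D)$ in the definition of $a_{A,C}^H$ against $\sign(A,C)$, $\sign(I,J)$ in the Pl\"ucker relations $f_{I,J}=\sum_{j\in J\setminus I}\pm x_{I\cup\{j\}}x_{J\setminus\{j\}}$, and against the ordering conventions $e_i\wedge e_j=-e_j\wedge e_i$ used throughout. Getting a single clean base case ($H=\{1,2,3,4\}$ with all the $\binom{4}{2}=6$ relevant index pairs, partitioned into the three disjoint-union classes and the $A\cap C\neq\emptyset$ classes) fully worked out is where the real care is needed; everything else is either a transport-of-structure statement or an appeal to the already-cited fact that $\bw{2}\GL_n$ is the scheme-theoretic stabilizer of the Pl\"ucker ideal. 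Since the statement is quoted from~\cite{LubNek18} and is an analog of the results of~\cite{VP-Ep,VP-EOodd,VavLuzgE6}, I would also be prepared to simply cite that source and only sketch the translation here.
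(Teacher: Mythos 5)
First, there is no proof in the paper to compare against: Proposition~\ref{criterion} is quoted from~\cite[Theorem~3]{LubNek18}, so your fallback of simply citing that source is exactly what the paper does; note also that only the ``only if'' direction is ever used here (in Lemma~\ref{equat}). For that direction your reading is essentially right once one observes that $a_{A,C}^H(g)$ is the $H$-coordinate of $(ge_A)\wedge(ge_C)\in\bw{4}R^n$, so the two bullets say precisely that $(ge_A)\wedge(ge_C)$ depends only on $e_A\wedge e_C$, i.e.\ that $g$ is compatible with an induced endomorphism of $\bw{4}R^n$; for $g=\bw{2}x$ this is immediate, and the sign verification for a representative $H=\{1,2,3,4\}$ that you defer is the entire content of that half, not a detail. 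Beware also that the $a$'s are built from \emph{columns} of $g$, whereas stabilization of $\Plu(n,R)$ concerns the (contragredient) action on the coordinates $x_I$; identifying the two requires a transpose step that your sketch never makes explicit.

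The genuine gap is in the converse, at exactly the point you propose to ``absorb''. Your argument is ``$g$ stabilizes $\Plu(n,R)$, hence $g\in\bw{2}\GL_n(R)$'', but the paper asserts that identification only for $n\neq 4$, while the proposition is invoked in the range $n\geq 4$ (Theorem~\ref{RDT} starts at $n=4$). At $n=4$ there is a single $H=\{1,2,3,4\}$, and the two bullets are equivalent to the similitude condition $g^t\Phi g=\lambda\Phi$, where $\Phi$ is the Gram matrix of $x_{12}x_{34}-x_{13}x_{24}+x_{14}x_{23}$ and $\lambda=a_{12,34}^H(g)$ (automatically a unit, since $\lambda^6=\det(g)^2$). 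This is strictly weaker than membership in the exterior square: for $g=\bw{2}x$ the multiplier is $\lambda=\det x$ and $\det g=(\det x)^3=\lambda^3$, whereas, over $\mathbb{Q}$ say, the permutation matrix exchanging the basis bivectors $e_{13}$ and $e_{24}$ and fixing the remaining four satisfies both bullets with $\lambda=1$ yet has determinant $-1$, so it is not the exterior square of any matrix over any extension field and hence lies outside $\bw{2}\GL_4(\mathbb{Q})$. Consequently a proof that uses only stability of the Pl\"ucker ideal (equivalently, the similitude property) cannot deliver the case $n=4$: you would need either to restrict to $n\geq 5$, or to add the extra determinant-versus-multiplier relation and consult how~\cite{LubNek18} treats $n=4$ and which definition of $\bw{2}\GL_4(R)$ it uses. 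This is the delicate point of the statement, not a scalar normalization one can wave away. (A minor item: in the second bullet $A\cup C=A'\cup B'$ must be read as $A\cup C=A'\cup C'$, as you implicitly did.)
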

In terms of a distance on a weight graph the first type is an analog of \textit{equations on a pair of adjacent columns}, whereas the second type is an analog of \textit{equations on two pairs of nonadjacent columns}.

\begin{lemma}\label{equat}
Let $R$ be a commutative ring, $n\geq 3$, and  $g\in\bw{2}\GL_n(R)$. Suppose that one column of $g$ with index $I=\{i_1i_2\}$ is trivial; then for any indices $K\in\bw{2}([n]\setminus\{i_1,i_2\})$ and $J\in\bw{2}[n]$ such that $\height(I,J)=|I\cap J|=1$, we have $g_{K,J}=0$.
\end{lemma}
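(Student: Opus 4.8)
The plan is to exploit Proposition~\ref{criterion} directly, since the hypothesis "the column with index $I=\{i_1,i_2\}$ is trivial" means precisely that $g_{B,I}=0$ for all $B\in\bw{2}[n]$. First I would pick the pair $J\in\bw{2}[n]$ with $\height(I,J)=1$ and the index $K\in\bw{2}([n]\setminus\{i_1,i_2\})$ whose entry we wish to annihilate, and then choose a suitable $H\in\bw{4}[n]$ and a second index to form an equation of the first type in Proposition~\ref{criterion}. The natural choice is to write $J=\{a,b\}$ with, say, $a\in I$ and $b\notin I$, and to consider the four-element set $H:=I\cup K$ (this has cardinality $4$ precisely because $K$ is disjoint from $I$). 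Then in the sum $a^H_{A,C}(g)=\sum_{B\sqcup D=H}\sign(B,D)g_{B,A}g_{D,C}$ one wants $A$ or $C$ to equal $I$ so that the triviality of the $I$-column kills most terms, leaving exactly the product $g_{K,J}$ times a nonzero sign.

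Concretely, the key step is: take $A:=I$, $C:=J$ (note $A\cap C=\{a\}\neq\emptyset$, so the first bullet of Proposition~\ref{criterion} applies and gives $a^H_{I,J}(g)=0$), with $H=I\cup K=\{i_1,i_2\}\cup K$. Expanding $a^H_{I,J}(g)=\sum_{B\sqcup D=H}\sign(B,D)g_{B,I}g_{D,J}$: every term with $B\neq$ a subset disjoint enough from $I$ still contains the factor $g_{B,I}$, and since the $I$-column is trivial, $g_{B,I}=0$ unless... in fact $g_{B,I}=0$ for \emph{every} $B$, which would make the whole sum vanish trivially and give no information. So the correct move is instead to put $I$ in the \emph{second} slot paired against a partition of $H$ in which one block is $K$: choose $H$ and the splitting so that the term $\sign(K,I)\,g_{K,J}\,g_{I,\text{(something)}}$ survives — but $g_{I,\cdot}$ is a \emph{row} index, not a column index, so triviality of the column does not directly help there either. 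The resolution is to use the equation with $g_{\cdot,I}$ appearing as a factor: set $C:=I$, $A:=K'$ for an appropriate $K'$, $H$ chosen so that the only partition $B\sqcup D=H$ with $D$ "compatible" forces $B=J$-type index and $D$ ranging over all $2$-subsets of $H\setminus B$; since $g_{D,I}=0$ for all $D$, again everything dies. Hence the genuine argument must invoke the \emph{second} bullet (nonadjacent columns): pick $A,C$ with $A\cup C=A'\cup C'$, $A\cap C=\emptyset$, where one of the unprimed or primed pairs equals $I$ together with its disjoint complement, so that $\sign(A,C)a^H_{A,C}(g)=\sign(A',C')a^H_{A',C'}(g)$ with one side manifestly zero (because a column $g_{\cdot,I}$-factor appears in \emph{every} term) and the other side reducing, after all its own column-$I$ terms drop out, to a single surviving monomial $\pm\,g_{K,J}\cdot g_{L,M}$ with $g_{L,M}$ a known nonzero-generically entry — or better, to $\pm g_{K,J}\cdot g_{K'',I}$ which is again zero, a contradiction unless $g_{K,J}=0$.

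So the real content, and the step I expect to be the main obstacle, is the \textbf{combinatorial bookkeeping}: choosing $H\in\bw{4}[n]$ and the pair(s) $A,C$ (and $A',C'$) so that in the Plücker-type identity of Proposition~\ref{criterion}, after deleting every term containing a factor from the trivial column $g_{\cdot,I}$, exactly one monomial survives and it is a nonzero multiple of $g_{K,J}$. The condition $\height(I,J)=1$ is what makes this possible: if $J\cap I=\{a\}$ and $J=\{a,b\}$, then taking $H:=\{i_1,i_2\}\cup K$ (with $K=\{k_1,k_2\}$ disjoint from $I$), and considering the identity relating $a^H_{K,\{b,\cdot\}}$ to $a^H_{I,\{k,\cdot\}}$-type expressions, one of the two sides has \emph{all} its terms carrying a $g_{\cdot,I}$ factor and the other has all but one. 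I would work out this correspondence case-by-case according to the position of $i_1$ relative to the second element of $J$ (as in the six-case analysis of Theorem~\ref{stabil}), track the signs $\sign(B,D)$ and $\sign(A,C)$ carefully, and conclude $g_{K,J}=0$. An alternative, possibly cleaner route that I would also try: reduce to the previously proved stabilization, applying an element of $\bw{2}\E_n(R)$ that moves the trivial column into a standard position (say the column indexed $\{1,2\}$) and then reading off the vanishing from the explicit shape of the matrices in $\bw{2}\GL_n(R)$ with a standard trivial column, i.e. matrices lying in a parabolic-type subgroup; but the direct Proposition~\ref{criterion} computation is more self-contained and is the approach I would present.
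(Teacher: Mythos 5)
There is a genuine gap, and it originates in a misreading of the hypothesis. In this paper a ``trivial'' column indexed by $I$ is the \emph{standard basis} column, i.e.\ $g_{I,I}=1$ and $g_{B,I}=0$ for all $B\neq I$ (see the example after the lemma, where the matrix with trivial first column lies in the parabolic $P_{12}$ and has a $1$ in the corner). You instead take it to mean $g_{B,I}=0$ for \emph{every} $B$ --- which cannot happen for an invertible matrix --- and this misreading causes you to discard your first, correct idea. With the correct interpretation, that first attempt is exactly the paper's proof and it closes immediately: take $A=I$, $C=J$ (so $A\cap C\neq\emptyset$ since $\height(I,J)=1$, and the first bullet of Proposition~\ref{criterion} gives $a^H_{I,J}(g)=0$), and $H:=I\cup K\in\bw{4}[n]$ (possible because $K$ is disjoint from $I$). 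In
$$\sum_{B\sqcup D=H}\sign(B,D)\,g_{B,I}\,g_{D,J}=0$$
every term with $B\neq I$ dies because the $I$-column is trivial, and the single surviving term is $\sign(I,K)\,g_{I,I}\,g_{K,J}=\pm\,g_{K,J}$; hence $g_{K,J}=0$.

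The rest of your proposal --- switching $I$ to the row slot, then invoking the second bullet (nonadjacent columns) with an unspecified choice of $A,C,A',C'$ and a promised case-by-case sign bookkeeping --- is never carried out and is in fact unnecessary; as written it does not constitute a proof, since no concrete identity is produced in which exactly one monomial survives. Your fallback suggestion (move the trivial column to a standard position by an element of $\bw{2}\E_n(R)$ and read off the parabolic shape) is circular here, because the shape of matrices with a trivial column in $\bw{2}\GL_n(R)$ is precisely what the lemma establishes. So the fix is simply to restore the correct meaning of ``trivial column'' and keep your original choice $A=I$, $C=J$, $H=I\cup K$.
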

\begin{proof}
Suppose that a matrix $g$ belongs to $\bw{2}\GL_n(R)$; then by Proposition~\ref{criterion} for any indices $A,C\in\bw{2}[n],\, \height(A,C)=1$ and for any $H\in\bw{4}[n]$, we have
$$\sum\limits_{B\sqcup D = H} \sign(B,D) g_{B,A} g_{D,C}=0.$$
Let $g$ has the trivial column $A$. It follows that the above sum equals $g_{H\setminus A,C}$.

\end{proof}

\begin{example}
Let $g\in\bw{2}\GL_5(R)$ has the first trivial column, i.\,e., $g$ sits in the maximal parabolic subgroup $P_{12}$. Then by the latter lemma $g$ also sits in the submaximal parabolic subgroup ${}_{34}P_{12}$:
$$\left(\begin{smallmatrix}
1 & * & * & * & * & * & * & * & * & * \\
0 & * & * & * & * & * & * & * & * & * \\
0 & * & * & * & * & * & * & * & * & * \\
0 & * & * & * & * & * & * & * & * & * \\
0 & * & * & * & * & * & * & * & * & * \\
0 & * & * & * & * & * & * & * & * & * \\
0 & * & * & * & * & * & * & * & * & * \\ 
0 & 0 & 0 & 0 & 0 & 0 & 0 & * & * & * \\
0 & 0 & 0 & 0 & 0 & 0 & 0 & * & * & * \\
0 & 0 & 0 & 0 & 0 & 0 & 0 & * & * & *
\end{smallmatrix}\right)$$
\end{example}

\section{Reverse decomposition of unipotens}\label{RDU}

In this section, we explain the main idea of the reverse decomposition of transvections for the bivector representation of $\GL$. For this, we have to recall some notation.

Let $A\trianglelefteq R$ be an ideal, and let $R/A$ be the factor-ring of $R$ modulo $A$. Denote by $\rho_A: R\longrightarrow R/A$ the canonical projection sending $\lambda\in R$ to $\bar{\lambda}=\lambda+A\in R/A$. Applying the projection to all entries of a matrix, we get the reduction homomorphism 
$$\begin{array}{rcl}
\rho_{A}:\bw{2}\GL_{n}(R)&\longrightarrow& \bw{2}\GL_{n}(R/A)\\
a &\mapsto& \overline{a}=(\overline{a}_{I,J})
\end{array}$$
The kernel of the homomorphism $\rho_{A}$, $\bw{2}\GL_{n}(R,A)$, is called the \textit{principal congruence subgroup} of level $A$, whereas the \textit{full congruence subgroup} $\CC\bw{2}\GL_{n}(R,A)$ is the full preimage of the center of $\bw{2}\GL_{n}(R/A)$ under $\rho_{A}$. Let us remark that $\bw{2}\GL_{n}(R,A)\leq\CC\bw{2}\GL_{n}(R,A)$ and both groups $\bw{2}\GL_{n}(R,A)$ and $\CC\bw{2}\GL_{n}(R,A)$ are normal in $\bw{2}\GL_{n}(R)$.
\begin{prop}\label{scf}
Let $R$ be a commutative ring, $n\geq 3$. Then for any ideal $A\trianglelefteq R$ the equality
$$[\CC\bw{2}\GL_n(R,A),\bw{2}\E_n(R)]=\bw{2}\E_n(R,A)$$
holds
\end{prop}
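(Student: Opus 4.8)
The plan is to prove the two inclusions separately, following the familiar pattern for the standard commutator formula $[\CC_n(R,A),\E_n(R)]=\E_n(R,A)$ in the natural representation, but carried out inside the image group $\bw{2}\GL_n(R)$ with the exterior transvections $\bw{2}t_{i,j}(\xi)$ playing the role of the root generators.

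\textbf{The inclusion $\bw{2}\E_n(R,A)\leq[\CC\bw{2}\GL_n(R,A),\bw{2}\E_n(R)]$.} By definition $\bw{2}\E_n(R,A)$ is the normal closure in $\bw{2}\E_n(R)$ of the elementary generators $\bw{2}t_{i,j}(\xi)$ with $\xi\in A$, so, since the right-hand side is normal in $\bw{2}\E_n(R)$, it suffices to show each $\bw{2}t_{i,j}(\xi)$ with $\xi\in A$ lies in the commutator subgroup. The key point is that any transvection of level $A$ can be written as a single commutator of a level-$A$ transvection with a level-$R$ one: using Proposition~\ref{ImageOfTransvFor2} together with the Chevalley commutator formula among the ordinary transvections $t_{I,J}$, one checks that for suitable choices $\bw{2}t_{i,j}(\xi)=[\bw{2}t_{i,k}(\xi),\bw{2}t_{k,j}(1)]$ (for $n\geq 4$ there is always an index $k\notin\{i,j\}$; for $n=3$ one handles the three positions directly). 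Since $\bw{2}t_{i,k}(\xi)\in\bw{2}\GL_n(R,A)\leq\CC\bw{2}\GL_n(R,A)$ and $\bw{2}t_{k,j}(1)\in\bw{2}\E_n(R)$, the inclusion follows.

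\textbf{The inclusion $[\CC\bw{2}\GL_n(R,A),\bw{2}\E_n(R)]\leq\bw{2}\E_n(R,A)$.} It is enough to bound generators: for $g\in\CC\bw{2}\GL_n(R,A)$ and a generator $\bw{2}t_{i,j}(\xi)$ of $\bw{2}\E_n(R)$, show $[g,\bw{2}t_{i,j}(\xi)]\in\bw{2}\E_n(R,A)$. Reducing modulo $A$, the image $\bar g$ is central in $\bw{2}\GL_n(R/A)$, hence $[\bar g,\overline{\bw{2}t_{i,j}(\xi)}]=1$, so $[g,\bw{2}t_{i,j}(\xi)]\in\bw{2}\GL_n(R,A)$. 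The remaining task is to promote membership in the principal congruence subgroup $\bw{2}\GL_n(R,A)$ to membership in the relative elementary group $\bw{2}\E_n(R,A)$; this is where the stabilization results of Section~\ref{stab} enter. Writing $h:=[g,\bw{2}t_{i,j}(\xi)]$, one uses the element $T_{*,j}$ from Theorem~\ref{stabil} (which lies in $\bw{2}\E_n(R)$ and, because $g$ centralizes modulo $A$, can here be taken with entries in $A$, hence in $\bw{2}\E_n(R,A)$) to clean up a column of $h$; after finitely many such steps $h$ is reduced, modulo $\bw{2}\E_n(R,A)$, to a matrix that is both congruent to the identity modulo $A$ and essentially diagonal/monomial, and the last piece is absorbed using Whitehead-lemma–type identities for $\bw{2}\E_n$. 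Throughout, the standard fact that $\bw{2}\E_n(R,A)$ is normal in $\bw{2}\E_n(R)$ (which itself follows from normality of $\E_N(R,A)$ in $\E_N(R)$ via Proposition~\ref{ImageOfTransvFor2}) is what lets us argue generator by generator.

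\textbf{Main obstacle.} The delicate step is the second inclusion, specifically the passage from $\bw{2}\GL_n(R,A)$ to $\bw{2}\E_n(R,A)$: unlike in the natural representation, an element of $\bw{2}\GL_n(R,A)$ is constrained by the Plücker relations, and one must make sure the column-cleaning via $T_{*,j}$ respects membership in $\bw{2}\GL_n(R)$ at every intermediate stage (here Lemma~\ref{equat} and Proposition~\ref{criterion} are the tools that guarantee the auxiliary zero entries one needs). Handling the small rank $n=3$, where $\bw{2}\GL_3(R)\cong\GL_3(R)$ and Theorem~\ref{stabil2} is unavailable, may require citing Preusser's case separately or a direct $\GL_3$ argument. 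Also worth care: tracking the signs $\sign(s,j)$ coming from the non-ascending index conventions, so that the $T_{*,j}$ used in the reduction genuinely has all its parameters in the ideal $A$.
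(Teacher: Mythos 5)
Your first inclusion is fine: since $\bw{2}$ is a group homomorphism, $\bw{2}t_{i,j}(\xi)=[\bw{2}t_{i,k}(\xi),\bw{2}t_{k,j}(1)]$ holds whenever $k\notin\{i,j\}$ (such a $k$ exists already for $n=3$, so no separate case is needed), and $\bw{2}t_{i,k}(\xi)\in\bw{2}\GL_n(R,A)\leq\CC\bw{2}\GL_n(R,A)$, while the right-hand side is normalized by $\bw{2}\E_n(R)$, so the normal closure argument goes through. The genuine gap is in the second inclusion. After you observe that $h:=[g,\bw{2}t_{i,j}(\xi)]\in\bw{2}\GL_n(R,A)$, the remaining step ``promote membership in the principal congruence subgroup to membership in $\bw{2}\E_n(R,A)$ by column-cleaning with $T_{*,j}$'' does not work as described, for two reasons. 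First, Theorem~\ref{stabil} asserts $T_{*,j}\cdot w=w$: the element $T_{*,j}$ \emph{stabilizes} a column, it does not reduce it to a standard basis vector, so it is not a cleaning device at all (in the paper it is used to manufacture elements of parabolic subgroups, not to trivialize congruence matrices); moreover its parameters are the entries $w_{sj}$ of the chosen column, one of which is congruent to $1$ modulo $A$ whenever the column index contains $j$, so $T_{*,j}$ need not lie in $\bw{2}\E_n(R,A)$. Second, and more fundamentally, an arbitrary element of $\bw{2}\GL_n(R,A)$ need \emph{not} lie in $\bw{2}\E_n(R,A)$ (already for $\GL_n$ the quotient $\GL_n(R,A)/\E_n(R,A)$ is a nontrivial relative $K_1$ in general), so any argument that, like yours, forgets the commutator structure of $h$ after the reduction modulo $A$ and tries to trivialize a generic congruence matrix by elementary operations must fail. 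The known proofs of the hard inclusion use that structure essentially, via localization(-completion) methods or conjugation calculus/decomposition of unipotents, and are substantially more involved than a Whitehead-lemma absorption.

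Note also that the paper does not prove Proposition~\ref{scf} at all: it is quoted as the known ``standard commutator formula'' with a reference to~\cite{HVZrelachev}. So the intended justification is a citation of the relative commutator formula for $\bw{2}\GL_n$ (or for $\GL_N$ restricted along the embedding), and your sketch of the difficult inclusion cannot serve as a self-contained replacement without importing one of those machineries.
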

This result is called the \textit{standard commutator formula}. It has a lot of proofs in various contexts, see e.\,g., the paper~\cite{HVZrelachev}.

The \textit{upper level} of a matrix $g\in\bw{2}\GL_n(R)$ is the smallest ideal $I=\level(g)\trianglelefteq R$ such that $g\in \CC\bw{2}\GL_n(R,I)$. As in the case of the general linear group, the upper level is generated by the off-diagonal entries $g_{I,J}$, $I\neq J$ and by the pair-wise differences of its diagonal entries $g_{I,I}-g_{J,J}$, $I\neq J$. Note that it suffices to consider only the ``fundamental'' differences $g_{I,I}-g_{\tilde{I},\tilde{I}}$, where $\tilde{I}$ is the next index after $I$ in $\bw{2}[n]$ in ascending order. Thus, the upper level $\level(g)$ is generated by $\binom{n}{2}^2-1$ elements.

\begin{lemma}\label{monomial}
Let $P_{ij}:=\bw{2}t_{i,j}(1)\bw{2}t_{j,i}(-1)\bw{2}t_{i,j}(1)\in\bw{2}\E_n(R)$, where $1\leq i\neq j\leq n$.
Then for any index $k\neq i,j$ and any $\xi\in R$:
\begin{itemize}
    \item ${}^{P_{ki}}\bw{2}t_{i,j}(\xi)=\bw{2}t_{k,j}(\xi)$;
    \item ${}^{P_{kj}}\bw{2}t_{i,j}(\xi)=\bw{2}t_{i,k}(\xi)$.
\end{itemize}
\end{lemma}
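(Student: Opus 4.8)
The plan is to reduce the claimed conjugation identities to the ordinary $\GL$-level facts about Weyl group elements permuting elementary root unipotents, transported through the Cauchy--Binet homomorphism. Recall that $P_{ij}=\bw{2}t_{i,j}(1)\bw{2}t_{j,i}(-1)\bw{2}t_{i,j}(1)$ is literally $\bw{2}$ applied to the standard monomial matrix $w_{ij}(1)=t_{i,j}(1)t_{j,i}(-1)t_{i,j}(1)\in\E_n(R)$, because $\bw{2}$ is a group homomorphism on the image $\bw{2}(\GL_n(R))$. Likewise $\bw{2}t_{i,j}(\xi)=\bw{2}(t_{i,j}(\xi))$. So the identity ${}^{P_{ki}}\bw{2}t_{i,j}(\xi)=\bw{2}t_{k,j}(\xi)$ will follow by applying $\bw{2}$ to the known $\GL_n$-relation ${}^{w_{ki}(1)}t_{i,j}(\xi)=t_{k,j}(\pm\xi)$, once I check that the sign works out to $+1$, and similarly ${}^{w_{kj}(1)}t_{i,j}(\xi)=t_{i,k}(\pm\xi)$ for the second bullet.

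Concretely, first I would record the elementary $\GL_n$ computation: conjugating $t_{i,j}(\xi)$ by $w_{ki}(1)$ has the effect of swapping the roles of indices $k$ and $i$ (up to a sign coming from the monomial matrix), hence produces $t_{k,j}(\varepsilon\xi)$ for some $\varepsilon\in\{\pm1\}$ depending only on $i,j,k$. One pins down $\varepsilon$ by a direct $2\times 2$ or $3\times 3$ matrix multiplication, or by invoking the standard Chevalley-group formula $w_\alpha(1)x_\beta(\xi)w_\alpha(1)^{-1}=x_{s_\alpha\beta}(c_{\alpha,\beta}\xi)$ for type $A_{n-1}$; the point is that the same sign will reappear after applying $\bw{2}$, and the asserted formulae have no signs, so I must make sure the conventions in Proposition~\ref{ImageOfTransvFor2} are consistent with the claim. (If a stray sign does appear, it can be absorbed by noting $\bw{2}t_{k,j}(\varepsilon\xi)=\bw{2}t_{k,j}(\pm\xi)$ and checking the statement is stated for all $\xi\in R$, so the sign on $\xi$ is immaterial for the \emph{set} of conjugates — but here the statement is an exact equality, so I would verify $\varepsilon=+1$.)

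Then I would simply push the $\GL_n$-identity through the homomorphism: $\bw{2}\bigl({}^{w_{ki}(1)}t_{i,j}(\xi)\bigr)={}^{\bw{2}w_{ki}(1)}\bw{2}t_{i,j}(\xi)={}^{P_{ki}}\bw{2}t_{i,j}(\xi)$, and the left side is $\bw{2}(t_{k,j}(\xi))=\bw{2}t_{k,j}(\xi)$. The second bullet is entirely symmetric, using $w_{kj}(1)$ in place of $w_{ki}(1)$ and the relation ${}^{w_{kj}(1)}t_{i,j}(\xi)=t_{i,k}(\xi)$ on columns $j\leftrightarrow k$. Throughout, the hypothesis $k\neq i,j$ is exactly what guarantees that $s_{ki}$ (resp. $s_{kj}$) sends the relevant root to another root of the form $e_k-e_j$ (resp. $e_i-e_k$) rather than to its negative, so no inversion/sign catastrophe occurs; for $n\geq 3$ there is always room for such a $k$, consistent with the standing assumption.

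The main obstacle I anticipate is purely bookkeeping of signs: the transvections $t_{I,J}$ in $\E_N(R)$ are indexed by \emph{ordered} pairs with a $\sign(i_1,i_2)$ convention, and the expansion $(1)$ for $\bw{2}t_{i,j}(\xi)$ already carries alternating signs ($+,-,+$ across the three blocks). So when I conjugate the product $(1)$ for $\bw{2}t_{i,j}(\xi)$ by $\bw{2}w_{ki}(1)$ termwise and re-collect, I must track how the index sets $\{k,i\},\{k,j\},\{i,l\},\ldots$ reorder and whether the resulting string of elementary $\E_N$-transvections reassembles into exactly the product $(1)$ for $\bw{2}t_{k,j}(\xi)$ with the correct signs. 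My expectation is that everything cancels cleanly — this is why the lemma has the clean form it does — but it is the step that needs care rather than cleverness. An alternative, possibly cleaner route that avoids the $\E_N$ combinatorics altogether is to work entirely inside $\GL_n(R)$, prove the two identities there for $w_{ki}(1)$ and $w_{kj}(1)$ by the standard Chevalley relations, and only at the very end apply the homomorphism $\bw{2}$; I would present the proof that way.
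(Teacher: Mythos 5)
Your proposal is correct and is exactly the intended argument: the paper offers no written proof (it says the proof is straightforward), and the cleanest justification is precisely your reduction, namely $P_{ki}=\bw{2}\bigl(t_{k,i}(1)t_{i,k}(-1)t_{k,i}(1)\bigr)$ together with the $\GL_n$-relations ${}^{w_{ki}(1)}t_{i,j}(\xi)=t_{k,j}(\xi)$ and ${}^{w_{kj}(1)}t_{i,j}(\xi)=t_{i,k}(\xi)$, pushed through the Cauchy--Binet homomorphism. The sign you flagged does come out to $+1$: since $w_{ki}(1)$ sends $e_i\mapsto e_k$, $e_k\mapsto -e_i$ and fixes $e_j$, one gets $w_{ki}(1)\,e_{i,j}\,w_{ki}(1)^{-1}=e_{k,j}$ on the nose (and symmetrically $w_{kj}(1)\,e_{i,j}\,w_{kj}(1)^{-1}=e_{i,k}$), so no sign correction is needed and the stated equalities hold exactly.
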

The proof is straightforward.

To formulate the main result of the present paper we introduce some notation. A matrix of the form $g^{\pm h}$ is called an elementary [exterior] $g$-conjugate, where $g\in \bw{2}\GL_n(R)$ and $h\in\bw{2}\E_n(R)$.

\setcounter{theorem}{0}
\begin{theorem}\label{RDT}
Let R be a commutative ring, $n\geq 4$, and $g\in\bw{2}\GL_n(R)$. Then for any $\xi\in \level(g)$ the transvection $\bw{2}t_{k,l}(\xi)$ is a product of $\leq 8\big(\binom{n}{2}^2-1\big)$ elementary conjugates of $g$ and $g^{-1}$. Namely,
\begin{enumerate}
    \item\label{p8} $\bw{2}t_{k,l}(g_{I,J})$ is a product of eight elementary exterior $g$-conjugates, where $\height(I,J)=1$;
    \item\label{p16} $\bw{2}t_{k,l}(g_{I,J})$ is a product of sixteen elementary exterior $g$-conjugates, where $\height(I,J)=0$;
    \item\label{p24} $\bw{2}t_{k,l}(g_{I,I}-g_{J,J})$ is a product of $24$ elementary exterior $g$-conjugates, where $\height(I,J)=1$;
    \item\label{p48} $\bw{2}t_{k,l}(g_{I,I}-g_{J,J})$ is a product of $48$ elementary exterior $g$-conjugates, where $\height(I,J)=0$.
\end{enumerate}
\end{theorem}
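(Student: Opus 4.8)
The plan is to reduce everything to a single ``core'' computation: expressing one transvection $\bw{2}t_{k,l}(g_{I,J})$ of height-one type as a short product of elementary $g$-conjugates, and then to bootstrap the other three cases from it using the monomial elements $P_{ij}$ of Lemma~\ref{monomial} together with the commutator formalism. First I would fix a convenient ``generic'' position, say $I=\{1,2\}$ and $J=\{1,3\}$ for the height-one case, and consider the commutator $\bigl[g,\bw{2}t_{a,b}(1)\bigr]$ for a suitable pair $(a,b)$ chosen so that the transvection $\bw{2}t_{a,b}(1)$ touches only a controlled set of rows and columns of $g$ on the weight diagram. The point of the reverse decomposition of unipotents is that such a commutator, after one further conjugation by another elementary transvection and a second commutator, collapses — because of the shape of $\bw{2}t_{a,b}$ coming from formula~$(1)$ and the vanishing relations inside $\bw{2}\GL_n(R)$ recorded in Proposition~\ref{criterion} and Lemma~\ref{equat} — to exactly $\bw{2}t_{k,l}(g_{I,J})$ times elements that can be absorbed. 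Here the stabilization Theorem~\ref{stabil} is the essential lubricant: it lets me replace a partially-computed column of $g\cdot h$ by the corresponding column of $g$ itself at the cost of multiplying by one more element $T_{*,j}\in\bw{2}\E_n(R)$, which (after conjugating back) is again a product of elementary $g$-conjugates of bounded length. Counting the conjugates carefully in this reduction should give the bound $8$ claimed in part~(\ref{p8}).

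\textbf{From height one to the other cases.} Once part~(\ref{p8}) is in hand, parts~(\ref{p16})--(\ref{p48}) follow by elementary bookkeeping. For part~(\ref{p16}): if $\height(I,J)=0$, I would pick an intermediate index $K$ with $\height(I,K)=\height(K,J)=1$ (possible since $n\geq 4$), write $g_{I,J}$ itself is irrelevant — rather I want $\bw{2}t_{k,l}(g_{I,J})$, and I use the Chevalley commutator formula to write $\bw{2}t_{k,l}(g_{I,J})=\bigl[\bw{2}t_{k,m}(g_{I,J}),\bw{2}t_{m,l}(1)\bigr]$ or the analogous exterior identity, together with Lemma~\ref{monomial} to move the entry $g_{I,J}$ into a height-one slot; each of the two resulting height-one transvections costs eight conjugates by part~(\ref{p8}), for a total of $16$. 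For the diagonal-difference cases~(\ref{p24}) and~(\ref{p48}): the key algebraic identity is that $g_{I,I}-g_{J,J}$ appears as an \emph{off-diagonal} entry of the conjugate $g^{P_{IJ}}$ (or of a product $g\cdot g^{P_{IJ}}$), since conjugating by the ``Weyl'' element $P_{IJ}$ swaps the $I$ and $J$ coordinates and hence turns the diagonal difference into an entry that is accessible by parts~(\ref{p8}) or~(\ref{p16}); this contributes an extra factor of roughly $3$ (one $P_{IJ}$ is itself three elementary transvections, and the conjugation of each of the eight or sixteen $g$-conjugates by $P_{IJ}$ stays an elementary $g$-conjugate), giving $24$ and $48$ respectively. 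Finally, since $\level(g)$ is generated by the $\binom{n}{2}^2-1$ listed elements $g_{I,J}$ and $g_{I,I}-g_{\tilde I,\tilde I}$, and every $\xi\in\level(g)$ is an $R$-linear combination of these, additivity of transvections and the fact that $\bw{2}t_{k,l}(\lambda\eta)=\,^{\bw{2}d}\bw{2}t_{k,l}(\eta)$ for a suitable ``diagonal'' conjugation lets me assemble $\bw{2}t_{k,l}(\xi)$; the worst single generator costs $\leq 48\leq 8\bigl(\binom{n}{2}^2-1\bigr)$ once $n\geq 4$, which is where the stated uniform bound $8\bigl(\binom{n}{2}^2-1\bigr)$ comes from.

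\textbf{Main obstacle.} The genuinely delicate step is the core computation in part~(\ref{p8}): choosing the pair $(a,b)$ and the auxiliary conjugating transvections so that \emph{all} the cross-terms cancel. The exterior transvection $\bw{2}t_{i,j}(\xi)$ is a product of $n-2$ ordinary transvections $t_{I,J}$ spread across the weight diagram (Proposition~\ref{ImageOfTransvFor2}), so a naive commutator with $g$ produces many unwanted $t_{K,J}$ terms; the reason they vanish is exactly the content of Lemma~\ref{equat} and the equations of Proposition~\ref{criterion}, but one must verify that the \emph{specific} indices produced in the computation satisfy the height hypotheses of those statements. I expect this to require a careful case analysis on the relative positions of $I$, $J$ and the support of the conjugating transvections on the diagram $(A_{n-1},\varpi_2)$ — much in the spirit of the six-case analysis in the proof of Theorem~\ref{stabil} — and the whole point of having proved the stabilization theorems first is to keep this analysis finite and uniform in $n$. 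Once that single cancellation is established, the length count (eight $g$-conjugates) and the promotion to the other three cases are routine.
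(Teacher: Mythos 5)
Your proposal stalls exactly at the step that carries the whole theorem. For part~(\ref{p8}) you only describe a strategy (``choose $(a,b)$ and auxiliary transvections so that all cross-terms cancel'') and then, in your final paragraph, acknowledge that this cancellation is a delicate case analysis you have not performed; so the core claim is not proved. The paper's argument is completely concrete here: take $T=T_{*,1}=\prod_{s\neq 1}\bw{2}t_{s,1}(g_{1s,12})$ built from the first column of $g$, so that by Theorem~\ref{stabil} the matrix $h:=g^{-1}Tg$ has standard first column, hence lies in $P_{12}$ and, by Lemma~\ref{equat}, in the submaximal parabolic ${}_KP_{12}$; then
$$z:=[T^{-1}h,\bw{2}t_{2,3}(1)]^{T^{-1}}=[h,\bw{2}t_{2,3}(1)]\cdot[\bw{2}t_{2,3}(1),T]=u\cdot\bw{2}t_{2,1}(g_{13,12})$$
with $u$ in the abelian unipotent radical of ${}_KP_{12}$, and finally $[\bw{2}t_{1,3}(-1),z]=\bw{2}t_{2,3}(g_{13,12})$, a product of eight elementary $g$-conjugates; Lemma~\ref{monomial} and monomial conjugation of $g$ then give an arbitrary height-one position and arbitrary $(k,l)$. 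Without an explicit construction of this kind your part~(\ref{p8}) is a plan, not a proof.

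The reductions (\ref{p16})--(\ref{p48}), which you call routine bookkeeping, are moreover carried out incorrectly. Conjugation by the monomial elements of Lemma~\ref{monomial} comes from a permutation of $[n]$ and therefore preserves $\height(I,J)$: you cannot ``move the entry $g_{I,J}$ into a height-one slot'' this way, and the identity $\bw{2}t_{k,l}(g_{I,J})=[\bw{2}t_{k,m}(g_{I,J}),\bw{2}t_{m,l}(1)]$ is beside the point, since the obstruction in part~(\ref{p16}) is the position $(I,J)$ of the entry inside $g$, not the indices $(k,l)$ of the target transvection. Similarly, conjugating $g$ by a Weyl-type element only permutes its entries (diagonal stays diagonal, up to signs), so $g_{I,I}-g_{J,J}$ does not appear as an off-diagonal entry of such a conjugate, and the ``factor of roughly $3$'' is not a counting argument: conjugating a $g$-conjugate by a fixed element of $\bw{2}\E_n(R)$ is still a single $g$-conjugate, so your mechanism would yield $8$ or $16$, never $24$ or $48$. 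The correct device is conjugation by an elementary transvection: the $(ih_1,ih_2)$ entry of $g^{\bw{2}t_{i,j}(-1)}$ equals $g_{jh_1,ih_2}+g_{ih_1,ih_2}$, which gives (\ref{p16}) as $16=8+8$; the $(ih,jh)$ entry of $g^{\bw{2}t_{i,j}(1)}$ equals $g_{ih,ih}-g_{jh,jh}+g_{ih,jh}-g_{jh,ih}$, which gives (\ref{p24}) as $24=8+16$; and (\ref{p48}) is $48=24+24$ via an intermediate $K$ with $\height(I,K)=\height(K,J)=1$. Finally, your scaling step ``$\bw{2}t_{k,l}(\lambda\eta)={}^{\bw{2}d}\bw{2}t_{k,l}(\eta)$'' fails over a general commutative ring, where diagonal conjugation only rescales by units; the multiplier $\lambda$ should instead be inserted into the last commutator, e.g.\ $[\bw{2}t_{1,3}(-\lambda),z]=\bw{2}t_{2,3}(\lambda g_{13,12})$.
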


Before we prove the theorem, we formulate a corollary. The latter result can be regarded as a stronger version of the standard description of $\bw{2}\E_n(R)$--normalized subgroups. Hence, we have one more very
short proof of the Sandwich Classification Theorem for the exterior square of elementary groups.

\setcounter{theorem}{3}
\begin{theorem}
Let $H$ be a subgroup of $\bw{2}\GL_n(R)$. Then $H$ is normalized by $\bw{2}\E_n(R)$ if and only if
$$\bw{2}\E_n(R,A)\leq H\leq\CC\bw{2}\GL_n(R,A)$$
for some ideal $A$ of $R$.
\end{theorem}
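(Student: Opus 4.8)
The plan is to derive the Sandwich Classification Theorem as a formal consequence of the reverse decomposition (Theorem~\ref{RDT}) together with the standard commutator formula (Proposition~\ref{scf}). The ``if'' direction is immediate: if $\bw{2}\E_n(R,A)\leq H\leq\CC\bw{2}\GL_n(R,A)$, then using Proposition~\ref{scf} we get $[H,\bw{2}\E_n(R)]\leq[\CC\bw{2}\GL_n(R,A),\bw{2}\E_n(R)]=\bw{2}\E_n(R,A)\leq H$, so $\bw{2}\E_n(R)$ normalizes $H$. The content is in the ``only if'' direction.

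For the ``only if'' direction, assume $H$ is normalized by $\bw{2}\E_n(R)$. First I would define $A$ to be the ideal of $R$ generated by the upper levels $\level(g)$ of all $g\in H$; equivalently, $A$ is generated by all off-diagonal entries $g_{I,J}$ and all ``fundamental'' diagonal differences $g_{I,I}-g_{\tilde I,\tilde I}$ of matrices $g\in H$. By the very definition of the upper level, each $g\in H$ lies in $\CC\bw{2}\GL_n(R,\level(g))\leq\CC\bw{2}\GL_n(R,A)$, which gives the upper inclusion $H\leq\CC\bw{2}\GL_n(R,A)$. For the lower inclusion, I would show $\bw{2}\E_n(R,A)\leq H$. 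Since $\bw{2}\E_n(R,A)$ is, by Proposition~\ref{scf} applied with $A$ itself, equal to $[\CC\bw{2}\GL_n(R,A),\bw{2}\E_n(R)]$, and more concretely is the normal closure in $\bw{2}\E_n(R)$ of the transvections $\bw{2}t_{k,l}(\xi)$ with $\xi\in A$, it suffices to prove that every such generator $\bw{2}t_{k,l}(\xi)$ lies in $H$ for $\xi\in A$.

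Here is where Theorem~\ref{RDT} does the work. Because $H$ is $\bw{2}\E_n(R)$-normalized, for any $g\in H$ every elementary exterior $g$-conjugate $g^{\pm h}$ with $h\in\bw{2}\E_n(R)$ again lies in $H$, and hence any product of such conjugates lies in $H$. Theorem~\ref{RDT} expresses each transvection $\bw{2}t_{k,l}(g_{I,J})$ (for $\operatorname{ht}(I,J)\in\{0,1\}$) and each $\bw{2}t_{k,l}(g_{I,I}-g_{J,J})$ as exactly such a product. Therefore $\bw{2}t_{k,l}(\xi)\in H$ whenever $\xi$ is one of the generators of $\level(g)$ for some $g\in H$. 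Running over all $g\in H$ shows $\bw{2}t_{k,l}(\xi)\in H$ for $\xi$ ranging over a generating set of $A$; additivity $\bw{2}t_{k,l}(\xi)\bw{2}t_{k,l}(\zeta)=\bw{2}t_{k,l}(\xi+\zeta)$ (a consequence of formula $(1)$ and additivity of elementary transvections) extends this to all $\xi\in A$. Finally, conjugating by elements of $\bw{2}\E_n(R)$ and invoking normality of $H$ closes $H$ under the normal-closure operation, yielding $\bw{2}\E_n(R,A)=\langle\bw{2}t_{k,l}(\xi):\xi\in A\rangle^{\bw{2}\E_n(R)}\leq H$.

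The main obstacle, and the point that deserves the most care, is the additivity step together with checking that $\level(g)$ really is generated (as an ideal, not merely as a group) by the listed entries and differences — one must make sure that the conjugates $P_{ij}$ from Lemma~\ref{monomial} let us move a transvection $\bw{2}t_{k,l}(\xi)$ to any position, so that multiplying generators of $A$ by arbitrary ring elements stays inside $H$; this is exactly what promotes ``$\xi$ in a generating set'' to ``$\xi\in A$''. Once that bookkeeping is in place, the theorem follows purely formally from Theorem~\ref{RDT} and Proposition~\ref{scf}, with no further computation.
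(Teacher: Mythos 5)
Your proposal is correct and rests on exactly the same two ingredients as the paper --- Theorem~\ref{RDT} for the nontrivial inclusion and Proposition~\ref{scf} for the converse --- but it organizes the bookkeeping dually. The paper sets $A:=\{\xi\in R\mid \bw{2}t_{1,2}(\xi)\in H\}$, the largest candidate ideal, so that $\bw{2}\E_n(R,A)\leq H$ is (essentially) immediate, and then invokes Theorem~\ref{RDT} to show every $g\in H$ has $g_{I,J}$ and $g_{I,I}-g_{\tilde I,\tilde I}$ in $A$, giving $H\leq\CC\bw{2}\GL_n(R,A)$. You instead take $A$ to be the smallest candidate, the ideal generated by $\level(g)$, $g\in H$, so the upper inclusion is tautological and Theorem~\ref{RDT} is spent on the lower one. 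Both routes work; the paper's choice has the small advantage that one never needs to pass from ``$\xi$ in a generating set of $A$'' to ``$\xi\in A$'', which is precisely the step you flag. On that step, note that Lemma~\ref{monomial} alone does not suffice: conjugation by $P_{ij}$ only permutes the indices of a transvection and never rescales its parameter. What promotes a generating set to the whole ideal is the Chevalley commutator formula transported through $\bw{2}$, namely $[\bw{2}t_{k,m}(\zeta),\bw{2}t_{m,l}(\xi)]=\bw{2}t_{k,l}(\zeta\xi)$ for distinct $k,m,l$: since Theorem~\ref{RDT} already puts $\bw{2}t_{m,l}(\xi)$ in $H$ for each generator $\xi$ of $\level(g)$ and $H$ is $\bw{2}\E_n(R)$-normalized, this commutator lies in $H$, and additivity then handles sums. (The paper's version of the proof hides the symmetric fact that its $A$ is an ideal behind the word ``obvious'', and that too is settled by the same commutator trick combined with Lemma~\ref{monomial}.) With that one correction of the mechanism, your argument is complete and formally equivalent to the paper's.
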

\begin{proof}
Let $H$ is normalized by $\bw{2}\E_n(R)$. Set $A:=\{\xi\in R\;|\; \bw{2}t_{1,2}(\xi)\in H\}$. It is obvious that $\bw{2}\E_n(R,A)\leq H$. It remains to check that if $g\in H$ then $g_{I,J}$, $g_{I,I}-g_{\tilde{I},\tilde{I}}\in A$. But Theorem~\ref{RDT} states exactly this claim. Hence, $H\leq\CC\bw{2}\GL_n(R,A)$. The converse follows from Proposition~\ref{scf}.
\end{proof}

Now we must only check Theorem~\ref{RDT}. A proof of statement~$(\ref{p8})$ is a key step in our verification. Other cases follow from the fist one.

\begin{proof} 
We will use the first method of a column stabilization (without the Pl\"ucker relations) to cover the case $n=4$. Let $T=T_{*,1}=\prod\limits_{s\neq 1}\bw{2}t_{s,1}(g_{1s,12})$. By Proposition~\ref{stabil} the first column of $Tg$ equals the first column of the matrix $g$. Hence the first column of the matrix $h:=g^{-1}Tg$ is standard, i.\,e., $h$ lies in the parabolic subgroup $P_{12}$. By Lemma \ref{equat} $h$ also lies in the submaximal parabolic subgroup ${}_KP_{12}$, where $K$ corresponds the latter $\binom{n-2}{2}$ rows.

Next, note that for any $j\in \{3,4,5,\dots,n\}$ the exterior transvections $\bw{2}t_{1,j}(\xi)$ and $\bw{2}t_{2,j}(\xi)$ sit in the unipotent radical $U$ of the parabolic subgroup ${}_KP_{12}$. Using obvious formula $[xy,z]^{x}=[y,z]\cdot [z,x^{-1}]$, we get
$$z:=[T^{-1}h,\bw{2}t_{2,3}(1)]^{T^{-1}}=[h,\bw{2}t_{2,3}(1)]\cdot [\bw{2}t_{2,3}(1),T].$$
Now the matrix $z$ is a product of four elementary exterior conjugates of $g$ and $g^{-1}$. The first commutator $[h,\bw{2}t_{2,3}(1)]$ belongs to the unipotent radical $U$, whereas the second one equals $\bw{2}t_{2,1}(g_{13,12})$.
Since the transvection $\bw{2}t_{1,3}(1)$ also sits in $U$ and the unipotent radical is abelian, we obtain
$$[\bw{2}t_{1,3}(-1),z]=[\bw{2}t_{1,3}(-1),u\cdot \bw{2}t_{2,1}(g_{13,12})]=\bw{2}t_{2,3}(g_{13,12}).$$
Consequently the transvection $\bw{2}t_{2,3}(g_{13,12})$ is a product of eight elementary exterior conjugates of $g$ and $g^{-1}$. By Lemma~\ref{monomial} it follows that $\bw{2}t_{k,l}(g_{13,12})$ is a product of eight elementary exterior $g$-conjugates. It remains to note that we can bring $g_{I,J}$ to position $(13,12)$ conjugating by monomial matrices from $\bw{2}\E_n(R)$. 

Since $n\geq 4$ there are two distinct indices $h_1, h_2\in [n]\setminus \{i,j\}$. Let us remark that the entry of $g^{\bw{2}t_{i,j}(-1)}$ in the position $(ih_1,ih_2)$ equals $g_{jh_1,ih_2}+g_{ih_1,ih_2}$. Applying (\ref{p8}) to $g^{\bw{2}t_{i,j}(-1)}$, we get that $\bw{2}t_{k,l}(g_{jh_1,ih_2}+g_{ih_1,ih_2})$ is a product of eight elementary exterior $g$-conjugates. Therefore,
$$\bw{2}t_{k,l}(g_{jh_1,ih_2})=\bw{2}t_{k,l}(g_{jh_1,ih_2}+g_{ih_1,ih_2})\bw{2}t_{k,l}(-g_{ih_1,ih_2})$$
is a product of sixteen elementary exterior $g$-conjugates.

The third assertion follows from the above. Obviously, the entry of $g^{\bw{2}t_{i,j}(1)}$ in the position $(ih,jh)$ equals $g_{ih,ih}-g_{jh,jh}+g_{ih,jh}-g_{jh,ih}$ for any index $h\neq i,j$. Applying (\ref{p8}), we obtain that $\bw{2}t_{k,l}(g_{ih,ih}-g_{jh,jh}+g_{ih,jh}-g_{jh,ih})\in\bw{2}\E_n(R)$ is a product of eight elementary exterior $g$-conjugates. Finally,
$$\bw{2}t_{k,l}(g_{ih,ih}-g_{jh,jh})=\bw{2}t_{k,l}(g_{ih,ih}-g_{jh,jh}+g_{ih,jh}-g_{jh,ih})\bw{2}t_{k,l}(g_{jh,ih}-g_{ih,jh})$$
is a product of 24 elementary exterior $g$-conjugates.

To finish the proof of the theorem it only remains to check the last assertion. There is an index $K\in\bw{2}[n]$ such that $\height(I,K)=\height(J,K)=1$. Therefore,
$$\bw{2}t_{k,l}(g_{I,I}-g_{J,J})=\bw{2}t_{k,l}(g_{I,I}-g_{K,K})\bw{2}t_{k,l}(g_{K,K}-g_{J,J})$$
is a product of 48 elementary exterior $g$-conjugates.
\end{proof}

\bibliographystyle{zapiski}
\bibliography{english}

\providecommand{\bysame}{\leavevmode\hbox to3em{\hrulefill}\thinspace}
\providecommand{\MR}{\relax\ifhmode\unskip\space\fi MR }
\providecommand{\MRhref}[2]{%
  \href{http://www.ams.org/mathscinet-getitem?mr=#1}{#2}
}
\providecommand{\href}[2]{#2}
\begin{thebibliography}{10}

\bibitem{HSVZwidth}
Hazrat~R., Stepanov~A.~V., Vavilov~N.~A., Zhang~Z.
\emph{Commutator width in {C}hevalley groups}.
 --- Note Mat. \textbf{33} (2013), no.~1, 139--170.

\bibitem{HVZrelachev}
Hazrat~R., Vavilov~N., Zhang~Z.
\emph{Relative commutator calculus in {C}hevalley groups}.
 --- J. Algebra \textbf{385} (2013), 262--293.

\bibitem{LubNekoverExPowArx}
{Lubkov}~R., {Nekrasov}~I., \emph{Overgroups of exterior powers of an
  elementary group. i. levels and normalizers}, Linear and multilinear algebra,
  accepted for publication, 20 pages, located at
  https://arxiv.org/abs/1801.07918, 2018.

\bibitem{LubNek18}
Lubkov~R.~A., Nekrasov~I.~I.
\emph{Explicit equations for the exterior square of the general linear group}.
 --- J. Math. Sci. (N. Y.) \textbf{243} (2019), no.~4, 583--594.

\bibitem{PetDecomp}
Petrov~V.
\emph{Decomposition of transvections: an algebro-geometric approach}.
 --- St.Petersburg Math. J. \textbf{28} (2017), no.~1, 109--114.

\bibitem{atlas}
Plotkin~E.~B., Semenov~A.~A., Vavilov~N.~A.
\emph{Visual basic representations: an atlas}.
 --- Internat. J. Algebra Comput. \textbf{8} (1998), no.~1, 61--95.

\bibitem{PreusserHyp2}
Preusser~R.
\emph{Structure of hyperbolic unitary groups {II}: {C}lassification of
  {E}-normal subgroups}.
 --- Algebra Colloq \textbf{24} (2017), no.~2, 195--232.

\bibitem{PreusserOUodd}
Preusser~R.
\emph{Sandwich classification for {$O_{2n+1}(R)$} and {$U_{2n+1}(R,\Delta)$}
  revisited}.
 --- J. Group Theory \textbf{21} (2018), no.~4, 539--571. \MR{3819539}

\bibitem{PreusserGLOUeven}
Preusser~R.
\emph{Sandwich classification for {${\rm GL}_n(R)$}, {${\rm O}_{2n}(R)$} and
  {${\rm U}_{2n}(R,\Lambda)$} revisited}.
 --- J. Group Theory \textbf{21} (2018), no.~1, 21--44. \MR{3739342}

\bibitem{StepPhD}
Stepanov~A., \emph{Stability conditions in the theory of linear groups over
  rings}, Ph.D. thesis, Leningrad State Univ., 1987.

\bibitem{StepDecomp}
Stepanov~A.
\emph{A new look on decomposition of unipotents and the normal structure of
  {C}hevalley groups}.
 --- St.Petersburg Math. J. \textbf{28} (2017), no.~3, 411--419.

\bibitem{StepVavDecomp}
Stepanov~A.~V., Vavilov~N.~A.
\emph{Decomposition of transvections: {T}heme with variations}.
 --- K-Theory \textbf{19} (2000), no.~2, 109--153.

\bibitem{SuslinSerreConj}
Suslin~A.~A.
\emph{On the structure of the special linear group over polynomial rings}.
 --- Math. USSR. Izvestija \textbf{11} (1977), 221--238.

\bibitem{VP-Ep}
Vavilov~N., Petrov~V.
\emph{Overgroups of {$\mathrm{Ep}(2l,R)$}}.
 --- St.Petersburg Math. J. \textbf{15} (2004), no.~4, 515--543.

\bibitem{VP-EOodd}
Vavilov~N., Petrov~V.
\emph{Overgroups of {$\mathrm{EO}(n,R)$}}.
 --- St.Petersburg Math. J. \textbf{19} (2008), no.~2, 167--195.

\bibitem{VavHabilit}
Vavilov~N.~A., \emph{Subgroups of split classical groups},
  Habilitationsschrift, Leningrad State Univ., 1987.

\bibitem{VavDecomp25}
Vavilov~N.~A.
\emph{Decomposition of unipotents for $\mathrm{E}_6$ and $\mathrm{E}_7$: 25
  years after}.
 --- J. Math. Sci. (N. Y.) \textbf{219} (2016), no.~3, 355--369.

\bibitem{VavTowards}
Vavilov~N.~A.
\emph{Towards the reverse decomposition of unipotents}.
 --- Zapiski nauchnyh seminarov {POMI} \textbf{470} (2018), 21--37.
  \MR{3885103}

\bibitem{VavTowRel}
Vavilov~N.~A.
\emph{Towards the reverse decomposition of unipotents. ii. the relative case}.
 --- Zapiski nauchnyh seminarov {POMI} \textbf{484} (2019), 5--22.

\bibitem{VavKaz}
Vavilov~N.~A., Kazakevih~V.~G.
\emph{More variations on the decomposition of transvections}.
 --- J. Math. Sci. (N. Y.) \textbf{171} (2010), no.~3, 322--330.

\bibitem{VavLuzgE6}
Vavilov~N.~A., Luzgarev~A.~Y.
\emph{Normalizer of the {C}hevalley group of type {$E_6$}}.
 --- St.Petersburg Math. J. \textbf{19} (2008), no.~5, 699--718.

\bibitem{VavPere}
Vavilov~N.~A., Perelman~E.~Y.
\emph{Polyvector representations of {$\mathrm{GL}_n$}}.
 --- J. Math. Sci. (N. Y.) \textbf{145} (2007), no.~1, 4737--4750.

\bibitem{VPSComput}
Vavilov~N.~A., Plotkin~E., Stepanov~A.
\emph{Computations in {C}hevalley groups over commutative rings}.
 --- Soviet Math. Doklady \textbf{40} (1990), no.~1, 145--147.

\bibitem{VavThird}
Vavilov~N.
\emph{A third look at weight diagrams}.
 --- Rend. Sem. Mat. Univ. Padova \textbf{104} (2000), 201--250. \MR{1809357}

\end{thebibliography}

\end{document}